  \titlespacing*{\section}{0pt}{1.1em}{0.3em}
  \titlespacing*{\subsection}{0pt}{.5em}{0pt}
        \tikzset{->-/.style={decoration={markings, mark=at position #1 with
        {\arrow{Stealth[length=7pt,width=4.67pt]}}},postaction={decorate}}}
        \tikzset{mapping/.style={decoration={markings,
  mark=at position #1 with {\arrow{Classical TikZ Rightarrow[length=3pt]}}},postaction={decorate}}}
  \newcommand*\if@single[3]{%
    \setbox0\hbox{${\mathaccent"0362{#1}}^H$}%
    \setbox2\hbox{${\mathaccent"0362{\kern0pt#1}}^H$}%
    \ifdim\ht0=\ht2 #3\else #2\fi
    }
  \newcommand*\rel@kern[1]{\kern#1\dimexpr\macc@kerna}
  \newcommand*\widebar[1]{\@ifnextchar^{{\wide@bar{#1}{0}}}{\wide@bar{#1}{1}}}
  \newcommand*\wide@bar[2]{\if@single{#1}{\wide@bar@{#1}{#2}{1}}{\wide@bar@{#1}{#2}{2}}}
  \newcommand*\wide@bar@[3]{%
    \begingroup
    \def\mathaccent##1##2{%
      \if#32 \let\macc@nucleus\first@char \fi
      \setbox\z@\hbox{$\macc@style{\macc@nucleus}_{}$}%
      \setbox\tw@\hbox{$\macc@style{\macc@nucleus}{}_{}$}%
      \dimen@\wd\tw@
      \advance\dimen@-\wd\z@
      \divide\dimen@ 3
      \@tempdima\wd\tw@
      \advance\@tempdima-\scriptspace
      \divide\@tempdima 10
      \advance\dimen@-\@tempdima
      \ifdim\dimen@>\z@ \dimen@0pt\fi
      \rel@kern{0.6}\kern-\dimen@
      \if#31
        \overline{\rel@kern{-0.9}\kern\dimen@\macc@nucleus\rel@kern{0.4}\kern\dimen@}%
        \advance\dimen@0.4\dimexpr\macc@kerna
        \let\final@kern#2%
        \ifdim\dimen@<\z@ \let\final@kern1\fi
        \if\final@kern1 \kern-\dimen@\fi
      \else
        \overline{\rel@kern{-0.9}\kern\dimen@#1}%
      \fi
    }%
    \macc@depth\@ne
    \let\math@bgroup\@empty \let\math@egroup\macc@set@skewchar
    \mathsurround\z@ \frozen@everymath{\mathgroup\macc@group\relax}%
    \macc@set@skewchar\relax
    \let\mathaccentV\macc@nested@a
    \if#31
      \macc@nested@a\relax111{#1}%
    \else
      \def\gobble@till@marker##1\endmarker{}%
      \futurelet\first@char\gobble@till@marker#1\endmarker
      \ifcat\noexpand\first@char A\else
        \def\first@char{}%
      \fi
      \macc@nested@a\relax111{\first@char}%
    \fi
    \endgroup
  }
\theoremstyle:=definition,remark,plain\do{%
      \expandafter\g@addto@macro\csname th@\theoremstyle\endcsname{%
        \addtolength\thm@preskip\parskip
        }%
      }
  \newcommand\address[1]{}
  \newcommand\email[1]{}
  \newcommand\dedicatory[1]{}
  \theoremstyle{plain}
  \newtheorem{theorem}{Theorem}[section]
  \newtheorem{proposition}[theorem]{Proposition}
  \newtheorem{corollary}[theorem]{Corollary}
  \newtheorem{lemma}[theorem]{Lemma}
	\newtheorem*{keylem}{Theorem 6.6}
  \theoremstyle{definition}
  \newtheorem{definition}[theorem]{Definition}
  \newtheorem{remark}[theorem]{Remark}
  \newtheorem*{claim*}{Claim}
  \newtheorem*{question*}{Question}
  \newtheorem*{answer*}{Answer}
  \newtheorem*{application*}{Application}
  \newtheorem*{cgbthm}{Combinatorial Gauss--Bonnet Theorem}
  \newcommand{\secref}[1]{Section~\ref{Sec:#1}}
  \newcommand{\thmref}[1]{Theorem~\ref{Thm:#1}}
  \newcommand{\corref}[1]{Corollary~\ref{Cor:#1}}
  \newcommand{\lemref}[1]{Lemma~\ref{Lem:#1}}
  \newcommand{\propref}[1]{Proposition~\ref{Prop:#1}}
  \newcommand{\remref}[1]{Remark~\ref{Rem:#1}}
  \newcommand{\Z}{\ensuremath{\mathbf{Z}}\xspace}
  \newcommand{\R}{\ensuremath{\mathbf{R}}\xspace}
  \newcommand{\Link}{\ensuremath{\operatorname{Link}}\xspace}
  \newcommand{\Corners}{\ensuremath{\operatorname{Corners}}\xspace}
  \renewcommand{\H}{\mathcal H}
  \renewcommand{\k}{\kappa}
  \newcommand{\dS}{\partial S}
  \renewcommand{\epsilon}{\varepsilon}
  \newcommand{\ov}{\widebar}
  \newcommand{\fat}{\ensuremath{X_{\epsilon}}\xspace}	
  \DeclareMathOperator{\scl}{scl}
  \DeclareMathOperator{\cl}{cl}
  \DeclareMathOperator{\genus}{genus}
  \newcommand{\set}[1]{\ensuremath{\left\{\, {#1} \,\right\}}\xspace} 
  \newcommand{\abs}[1]{\ensuremath{\left\lvert {#1} \right\rvert}\xspace} 
	\newcommand{\card}[1]{\ensuremath{\#{#1}}\xspace}
  \newcommand{\st}{\ensuremath{\,\, \colon \,\,}\xspace} 
  \newcommand{\from}{\ensuremath{\colon \thinspace}\xspace} 
  \newcommand{\into}{\ensuremath{\hookrightarrow}\xspace} 
  \newcommand{\half}{\ensuremath{\mathcal{H}}\xspace}
  \newcommand{\trans}{\ensuremath{\pitchfork}\xspace}
  \newcommand\blfootnote[1]{%
    \begingroup%
    \renewcommand\thefootnote{}\footnote{#1}%
    \addtocounter{footnote}{-1}%
    \endgroup%
  }
\begin{document}


  \title {Genus bounds in right-angled Artin groups}
  \author {Max Forester, Ignat Soroko, and Jing Tao}
  \date{}

  \maketitle

  \begin{abstract}
      
    \noindent 
    We show that in any right-angled Artin group whose defining graph
    has chromatic number $k$, every non-trivial element has stable
    commutator length at least $1/(6k)$. Secondly, if the defining
    graph does not contain triangles, then every non-trivial element has
    stable commutator length at least $1/20$. 
    These results are obtained via an elementary geometric argument
    based on earlier work of Culler. 
    \blfootnote{2010 \emph{Mathematics Subject Classification.}
    57M07, 20F65, 20F67.} \blfootnote{\emph{Date:} \date{\today}}

  \end{abstract}

  \thispagestyle{empty}
 

  %
  %
 
\section{Introduction}

  In a topological space $X$ with fundamental group $G$, a loop $\gamma
  \from S^1 \to X$ representing an element $g\in G$ may extend to a map of
  an oriented surface $S \to X$ with boundary $\gamma$. The smallest genus
  of such a surface is called the \emph{commutator length} (cl) of $g \in G$.
  The \emph{stable commutator length} (scl) of $g$ is defined to be the
  limit $\scl(g) = \lim_{n \to \infty} \cl(g^n)/n$. These quantities have
  relevance in several areas, particularly low-dimensional topology,
  bounded cohomology, and dynamics (see~\cite{Cal} and references therein). 

  Both commutator length and stable commutator length can be very difficult
  to compute. Understanding the qualitative behavior of scl is a somewhat
  more tractable problem. For many important classes of groups it has been
  shown that the spectrum of values of scl has a gap above zero (e.g.
  \cite{CF,CFL,BBF}). An early result along these lines is due to Culler.
  In~\cite{Cul} he gave a lower bound for the stable commutator length of
  elements in a free group $F$: for every non-trivial $g\in F$,   
  \[
  \scl(g) \ \ge \ \frac16.
  \]

  The purpose of this note is to generalize Culler's argument to the case of
 right-angled Artin groups (RAAGs) in two different ways. We obtain: 

  \begin{theorem}\label{Thm:1}
    Let $G=A(\Gamma)$ be a right-angled Artin group whose defining
    graph $\Gamma$ has chromatic number $k$. Then every 
    non-trivial element $g\in G$ satisfies: 
    \[ \scl(g) \ \ge \ \frac1{6k}. \]
  \end{theorem}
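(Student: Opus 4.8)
The plan is to run Culler's surface argument inside the Salvetti complex of $G$, using a proper $k$-colouring of $\Gamma$ to organize the complementary regions. Recall that $G=A(\Gamma)=\pi_1(X)$ where $X$ is the Salvetti complex of $\Gamma$: a compact non-positively curved cube complex with a single vertex, one loop for each $v\in\Vertices(\Gamma)$, and one square for each edge of $\Gamma$. Its hyperplanes are labelled by the vertices of $\Gamma$, and two hyperplanes of $X$ cross one another only when their labels span an edge of $\Gamma$. Fix a proper colouring $c\colon\Vertices(\Gamma)\to\{1,\dots,k\}$; it partitions the hyperplanes of $X$ into $k$ \emph{colour classes}, and by the previous sentence two hyperplanes in a common colour class never cross. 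If $g\notin[G,G]$ then $\scl(g)=\infty$ (as $H_1(G)$ is torsion-free), so we may assume $g\in[G,G]\setminus\{1\}$; then $\scl(g)=\inf_S \frac{-\chi(S)}{2\,n(S)}$, the infimum over admissible surfaces $f\colon S\to X$ for the conjugacy class of $g$ — compact oriented surfaces $S$ with no disc or sphere components, mapping with $\partial S\to\gamma$ of nonzero total degree $n(S)$ (see \cite{Cal}) — and it suffices to prove $-\chi(S)\ge n(S)/(3k)$ for every such $S$.

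Represent $g$ by a cyclically reduced word $w=w_1\cdots w_m$ of length $m\ge 4$ in the generators and their inverses. One should first dispose, by a separate induction on $\Gamma$, of the case that $g$ is conjugate into a sub-RAAG $A(\Gamma_0)\times A(\Gamma_1)$ spanned by a join $\Gamma_0\ast\Gamma_1\subseteq\Gamma$ with $\Gamma_0,\Gamma_1$ nonempty: projecting to whichever factor carries a nontrivial component of $g$, and using $\chi(\Gamma_0\ast\Gamma_1)=\chi(\Gamma_0)+\chi(\Gamma_1)$ (so $\chi(\Gamma_j)\le\chi(\Gamma)$), reduces this case to a strictly smaller graph. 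With that out of the way, homotope $f$ into \emph{tight position}: transverse to every hyperplane, with no nullhomotopic circle in the $1$-manifold $M\subseteq S$ obtained as the preimage of the union of all hyperplanes, and with each region of $S\setminus M$ mapped as efficiently as possible into the unique chamber of the hyperplane complement (a contractible cube-corner); in particular no region is a \emph{monogon}, i.e.\ a disc meeting $M$ in a single arc, since the presence of one would force $w$ to be unreduced. Decompose $M=M_1\sqcup\dots\sqcup M_k$ by colour. The non-crossing property makes each $M_i$ a disjoint union of arcs and circles, and $M_i$ has exactly $n(S)\,m_i$ endpoints, where $m_i$ counts the colour-$i$ letters of $w$, so $\sum_i m_i=m$. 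Writing $a=\tfrac12 n(S)m$ for the total number of arcs of $M$ and $p$ for the number of disc regions of $S\setminus M$, Euler-characteristic bookkeeping gives $-\chi(S)\ge a-p$ (non-disc regions only sharpen the inequality). Thus it remains to bound $p$ from above — equivalently, to show that the complementary regions cannot be ``almost all bigons''.

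This last point is where the colouring enters, and I expect it to be the crux. The key observation is the \emph{bichromaticity of bigons}: a bigon region, i.e.\ a disc meeting $M$ in exactly two arcs, crosses two hyperplanes that run parallel along it; in a tight map two parallel hyperplanes cannot share a label (otherwise the two arcs would be isotopic and the region compressible), so their labels span an edge of $\Gamma$ and therefore have \emph{different} colours — every bigon records one of the commuting squares of $X$. Consequently, fixing a colour $i$ and cutting $S$ along $M_i$ alone reproduces Culler's original configuration: the $M_i$-arcs are disjoint, the arcs of the other colours lie inside the complementary regions and merely amalgamate bigons without introducing any cancellation, and reducedness of $w$ together with tightness forbid exactly the small regions (monogons, and now also monochromatic bigons) whose absence powers Culler's count. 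Running that count in the colour-$i$ picture should yield an estimate of the form
\[
  -\chi(S)\ \ge\ \frac{n(S)\,m_i}{3m},
\]
which for $k=1$ specializes to Culler's bound $-\chi(S)\ge n(S)/3$. Finally, by pigeonhole there is a colour $i$ with $m_i\ge m/k$, and applying the estimate to that colour gives $-\chi(S)\ge n(S)/(3k)$, whence $\scl(g)\ge 1/(6k)$. The obstacles I anticipate are: making ``tight position'' precise enough to control parallel families of $M$-arcs, the essential circles of $M$, and non-disc complementary regions; establishing bichromaticity of bigons cleanly (the one genuinely RAAG-theoretic ingredient); and verifying that Culler's counting argument survives the restriction to a single colour class inside the larger cube complex, with the stated constant.
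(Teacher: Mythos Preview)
Your proposal has a genuine gap. The claim on which it rests---``bichromaticity of bigons'' together with ``Culler's count in the colour-$i$ picture yields $-\chi(S)\ge n(S)m_i/(3m)$''---does not go through. If by a bigon you mean a two-sided region of $S\setminus M$ with both sides in $M$, then its corners are crossings and its sides carry adjacent (hence differently coloured) labels; but after tightening there are no such regions at all, so this buys nothing. If instead you mean a region whose boundary meets $M$ in two arcs (with further sides on $\partial S$), the claim is simply false: two parallel pattern arcs can carry the same label (e.g.\ whenever $w$ contains $a^2$) or distinct non-adjacent labels of the same colour, and ``different labels'' in no way implies ``span an edge of $\Gamma$''. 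So monochromatic parallel pairs are not forbidden, and cutting along $M_i$ alone does not reduce you to Culler's situation: the colour-$i$ boundary letters need not form a reduced word, and the image of $g$ in the free group on the colour-$i$ generators can be trivial (take $g=[a,b]$ with $a,b$ of different colours), leaving no lower bound on the number of $M_i$--bands in terms of $n(S)$.

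What is actually doing the work, and what your sketch is missing, is the \emph{non-overlapping property}: if $w$ is any cyclically reduced word representing the conjugacy class of $g^n$ in $A(\Gamma)$, then any subwords $u$, $u^{-1}$ of the cyclic word $w$ satisfy $|u|\le |w|/(2n)$. This is the only link between the pattern combinatorics and the exponent $n$; it is easy for free groups (where $w$ is literally an $n$th power) but nontrivial for RAAGs, and the paper devotes a separate section to proving it, using the CAT(0) structure on the universal cover of the Salvetti complex and all four Haglund--Wise specialness axioms. With that in hand the proof of \thmref{1} is short: tighten the pattern, deduce there are at least $n$ bands (parallelism classes of arcs), assign each band the colour of one of its arcs, pigeonhole to find $\ge n/k$ bands of a common colour, take one arc from each to obtain $\ge n/k$ pairwise disjoint non-parallel arcs, and bound that number by $6\genus(S)-3$. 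The colouring enters only at the pigeonhole step; the join reduction you propose is not needed.
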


  \begin{theorem}\label{Thm:2}
    Let $G=A(\Gamma)$ be a right-angled Artin group whose defining
    graph $\Gamma$ does not contain triangles. Then every 
    non-trivial element $g\in G$ satisfies: 
    \[ \scl(g) \ \ge \ \frac1{20}. \]
  \end{theorem}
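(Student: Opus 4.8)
The plan is to adapt Culler's surface argument to the Salvetti complex of $A(\Gamma)$ and to organize the Euler-characteristic bookkeeping via a combinatorial Gauss--Bonnet computation.

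\textbf{Reduction.} Let $X=X_\Gamma$ be the Salvetti complex: one vertex $v$, a directed loop-edge for each generator of $A(\Gamma)$, and one square for each edge of $\Gamma$. Since $\Gamma$ is triangle-free there are no cubes, so $X$ is a compact non-positively curved square complex with $\pi_1(X)\cong G$, and its universal cover is a $2$-dimensional CAT(0) square complex. Represent $g$ by a cyclically reduced word, hence by an immersed edge-loop $\gamma\subset X$ of length $L$. Recall $\scl(g)=\inf_S\frac{-\chi(S)}{2\,n(S)}$, the infimum over admissible surfaces $f\colon(S,\partial S)\to(X,\gamma)$ with $n(S)$ the degree of $\partial f$ over $\gamma$. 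By the standard tautness reductions, and since $G$ is torsion-free and bi-orderable (so no nontrivial power of $g$ is conjugate to a power of $g^{-1}$), we may assume $S$ has no sphere, disk, annulus or torus components, so that $-\chi(S)>0$. It then suffices to prove the uniform inequality $-\chi(S)\ \ge\ n(S)/10$.

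\textbf{Normal form.} Homotope $f$ into Culler-style normal form: transverse to the hyperplanes (midcubes) of $X$, with $f^{-1}(v)$ a graph and $f^{-1}(\text{open cell})$ a disjoint union of standard bands (over edges) and plaques (over squares). The hyperplane preimage $\tau=f^{-1}(\bigcup\text{hyperplanes})$ is a properly embedded $1$-manifold meeting $\partial S$ in exactly the $n(S)\,L$ points lying over the midpoints of the edges traversed by $\partial S$, and self-crossing only over square-centers of $X$. After discarding trivial circles and innermost trivial arcs of $\tau$ --- using that $X$ is aspherical and $\gamma$ is an immersion, which forbids monogons and bigons over a single edge --- the union $\tau\cup\partial S$ cuts $S$ into polygonal pieces, each mapping into a corner-neighborhood of $v$; after tautening, one expects every piece to be a quadrilateral. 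Crucially, triangle-freeness of $\Gamma$ says precisely that the link of $v$ in $X$ is a simplicial graph of girth $4$, which forces the link of every interior vertex of this cell structure to carry at least four corners; and cyclic reducedness of $g$ constrains which local pictures can occur along $\partial S$.

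\textbf{Count.} Declare every corner of every piece to have angle $\pi/2$ and apply the Combinatorial Gauss--Bonnet Theorem:
\[
2\pi\,\chi(S)\;=\;\sum_{\text{pieces }P}\kappa(P)\;+\;\sum_{w\in\operatorname{int}S}\kappa(w)\;+\;\sum_{w\in\partial S}\kappa(w).
\]
Each quadrilateral piece has $\kappa(P)=2\pi-4(\pi/2)=0$; each interior vertex has at least four corners, so $\kappa(w)=2\pi-d_w(\pi/2)\le 0$; hence
\[
2\pi\bigl(-\chi(S)\bigr)\;\ge\;-\sum_{w\in\partial S}\kappa(w)\;=\;\sum_{w\in\partial S}\Bigl(\tfrac{\pi}{2}d_w-\pi\Bigr),
\]
with positive contributions only from boundary vertices carrying a single corner. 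The remaining ingredient is a local analysis of $\partial S$: reading $\partial S$ as cyclic words in the letters of $g$, one assigns to each boundary vertex a charge depending on its two adjacent letters and shows that, using both reducedness and the triangle-free hypothesis, the positive charge of the single-corner vertices is outweighed by a surplus of at least $2\pi\,n(S)/10$. This yields $-\chi(S)\ge n(S)/10$, hence $\scl(g)\ge 1/20$.

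\textbf{Main obstacle.} The delicate point is this final local/boundary accounting: pinning down the charge function so that every configuration allowed by ``$g$ reduced'' and ``$\Gamma$ triangle-free'' has non-positive net curvature contribution apart from an explicitly bounded surplus, and verifying that the resulting global constant is exactly $1/10$ --- as opposed to Culler's $1/3$ in the free case, where $X$ is $1$-dimensional and the surface decomposes into bands rather than squares, so the local analysis is genuinely different. Enumerating the (short) list of local pictures of a boundary piece compatible with reducedness and with $\Gamma$ having no triangles, and checking that this list forces enough negative curvature, is the part of Culler's argument that must be reworked rather than quoted.
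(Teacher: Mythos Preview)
Your setup --- pulling back hyperplanes to a pattern $\tau$ on $S$, tautening, and running combinatorial Gauss--Bonnet with all angles $\pi/2$ --- matches the paper. But the curvature bookkeeping and, more seriously, the link to $n$ are not as you describe.

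First, in the taut picture the vertex curvatures are all \emph{exactly} zero, not merely $\le 0$: interior vertices are transverse crossings of two pattern arcs (four corners), and boundary vertices are endpoints of a single pattern arc (two corners). So the negative curvature does not live on $\partial S$ at all. It lives on the \emph{faces}: every face has $\ge 4$ sides by triangle-freeness, and $\kappa(f)=2\pi-\tfrac{\pi}{2}(\#\text{sides})\le 0$, with strict inequality exactly for the ``special'' faces having $\ge 5$ sides. Your expectation that ``every piece is a quadrilateral'' would force $\chi(S)=0$; it is precisely the non-quadrilateral faces that carry the argument, and the constant $1/5$ in $(\#\text{sides}-4)\ge\tfrac{1}{5}(\#\text{sides})$ for a $\ge 5$-gon is where the $5$ in $1/20$ originates.

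Second, and this is the real gap: the connection to the exponent $n$ is not a local boundary charge. The paper organizes the pattern arcs into \emph{bands} (maximal parallel families), observes that each band contributes a pair of disjoint subwords $u,u^{-1}$ to the cyclic boundary word $w_{\partial S}$, and then invokes a global algebraic fact about RAAGs --- the \emph{non-overlapping property} (Theorem~6.6): in any cyclically reduced word representing $g^n$, such a pair satisfies $|u|\le |w|/(2n)$. This forces at least $n$ bands. Since each band has, on each of its two interior sides, at least one adjacent special face, one gets $\sum_{\text{special }f}\#\text{sides}(f)\ge 2n$, and the Gauss--Bonnet sum yields $1-\chi(S)\ge 1+n/10$. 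The non-overlapping property is not a triviality: its proof uses essential characteristic sets in the CAT(0) cube complex and all four Haglund--Wise specialness axioms. Your proposed ``local analysis of $\partial S$ with a charge function'' does not substitute for this; cyclic reducedness and triangle-freeness alone impose no constraint tying the combinatorics of a single surface to $n$, because the boundary word for $g^n$ need not itself be an $n$-th power. This is exactly the difficulty the paper isolates and resolves, and it is absent from your outline.
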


  Note that \thmref{2} is not a consequence of \thmref{1}, as
  demonstrated by the existence of triangle-free graphs with large chromatic
  number, such as Mycielski's graphs~\cite{Myc}. 

  It should be noted that Culler's result has since been improved: Duncan
  and Howie \cite{DH} showed that $\scl(g) \geq 1/2$ for all non-trivial $g
  \in F$ (see \cite{Chen} for another proof of this result). This is
  the best possible lower bound for free groups since $\scl([a,b]) =
  1/2$ in $\langle a,b\rangle$. 

  We also note that Heuer, very recently, obtained the same lower
  bound of $1/2$ for scl in any right-angled Artin group
  \cite{Heuer}. His method is based on constructing quasimorphisms, as
  was the previous general lower bound of $1/24$ established in
  \cite{FFT}. 
  The arguments presented here are quite elementary and geometric in
  nature, and we believe that they are of independent interest.

  \subsection*{Methods}

  The proofs of Theorems \ref{Thm:1} and \ref{Thm:2} find explicit
  lower bounds for $\cl(g^n)$ in terms of $n$, by considering a map of
  a surface $S$ into a Salvetti complex $X(\Gamma)$, with boundary
  representing $g^n$. Taking pre-images of the hyperplanes, we obtain
  one-dimensional submanifolds of $S$, transversely labeled by
  generators of the right-angled Artin group. Unlike in the free
  groups setting, these curves may cross each other, if their labels
  are generators that commute. 

  \thmref{1} is obtained by showing that the collection of curves
  includes a suitably large sub-collection of properly embedded arcs
  that are pairwise disjoint and non-parallel in $S$. 
	
  For~\thmref{2}, we first ``tighten'' the pattern of
  curves and reduce the 
  genus of $S$; the resulting pattern of curves cuts $S$ into polygons, all
  having four or more sides. This is where the two-dimensionality
  assumption is used (otherwise there could be triangles). This
  combinatorial structure now has non-positive curvature and we can
  estimate $\chi(S)$ using a Gauss--Bonnet formula. 

  An easy but crucial step in Culler's argument is to observe that a
  cyclically reduced word $w$ in a free group cannot contain overlapping
  subwords of the form $u$ and $u^{-1}$. Thus, if $w^n$ contains both $u$
  and $u^{-1}$ then $\abs{u} \leq \abs{w}/2$. This fact is used to relate
  the amount of negative curvature in $S$ to the exponent $n$. 

  In the case of a right-angled Artin group $G$, we must consider pairs of
  subwords $u$, $u^{-1}$ appearing in a cyclically reduced word $w$ that
  \emph{represents} $g^n \in G$;  we need to know that such words always
  satisfy $\abs{u} \leq \abs{w}/(2n)$. The difficulty is that $G$ has
  relations and $w$ itself need not be a proper power. We call this
  property the \emph{non-overlapping property} (by analogy with the free
  groups case) and prove it for all right-angled Artin groups in
  \thmref{NoOverlapping}. This result is needed in both \thmref{1} and
  \thmref{2}. 

  The non-overlapping property for right-angled Artin groups appears to be
  rather non-trivial. For instance, the proof makes use of all of Haglund
  and Wise's axioms for special cube complexes from \cite{HaglundWise}. In
  \secref{NoOv} we give the proof, after reviewing some of the terminology
  and ideas from \cite{FFT} concerning essential characteristic sets
  in CAT(0) cube complexes. 

\subsection*{Acknowledgments}
    This material is based upon work supported by NSF grant DMS-1440140
    while the second and third authors were in residence at the
    Mathematical Sciences Research Institute in Berkeley, California,
    during the Fall 2016 semester. The second author acknowledges partial
    support from NSF grants DMS-1107452, DMS-1107263, DMS-1107367 ``RNMS:
    Geometric Structures and Representation Varieties'' (the GEAR Network).
    The second and third authors also acknowledge partial support from NSF
    grants DMS-1611758 and DMS-1651963. 

\section{Preliminaries}

  \label{Sec:Prelim}

  We remind the reader of some relevant definitions. 

  \begin{definition}[Commutator length, stable commutator length] \label{Def:scl} 
    Given $G$, let $X$ be a path connected space with fundamental group
    $G$. For any $g \in [G,G]$ the \emph{commutator length} of $g$ is
    equal to 
    \[ \cl(g) \ = \ \min_S \genus(S)\]
    where the minimum is taken over all continuous maps of surfaces $f
    \from S \to X$ such that $S$ is compact, connected, oriented, has one
    boundary component, and the restriction $f \vert_{\partial S}
    \from \partial S \to X$ represents the conjugacy class of $g$
    in $\pi_1(X)$. Such a surface will be called (in this paper) an
    \emph{admissible surface} for $g$. The condition that $g \in [G,G]$
    ensures that admissible surfaces exist. 

    The \emph{stable commutator length} of $g \in [G,G]$ is defined by
    the convergent limit 
    \[ \scl(g) \ = \ \lim_{n \to \infty} \cl(g^n)/n. \]
    See \cite{Cal} for details on convergence, and for other basic
    properties and equivalent definitions. 
    If $g^n \in [G,G]$ for some $n \not= 0$ we may define $\scl(g) =
    \scl(g^n)/n$, which is consistent with the first definition because of
    the identity $\scl(g^n) = n  \scl(g)$. If $g^n \not\in [G,G]$ for any
    $n \not= 0$ then $\scl(g) = \infty$ by convention. 
  \end{definition}

  \begin{definition}[Right-angled Artin group]
    Let $\Gamma$ be a finite simplicial graph with vertex set
    $V(\Gamma)$ and edge set $E(\Gamma)$. The \emph{right-angled
    Artin group}, or RAAG, associated to $\Gamma$ is a finitely presented
    group $A(\Gamma)$ given by the presentation 
    \[
    A(\Gamma) \ = \ \left\langle\, a\in V(\Gamma)  \mid [a,b]=1 \text{ if } (a,b)\in
      E(\Gamma)\,\right\rangle. 
    \] 
  \end{definition}

  \begin{definition}[Salvetti complex]
		For each $a\in V(\Gamma)$ let $S^1_{a}$ be a circle endowed
    with the structure of a CW complex having a single $0$--cell and a single
    $1$--cell. Let $n=\#V(\Gamma)$ be the number of vertices of $\Gamma$
    and let $T=\prod_{a\in V(\Gamma)} S^1_{a}$ be an $n$--dimensional torus
    with the product CW structure. For every complete subgraph
    $K\subseteq\Gamma$ with $V(K)=\{a_{1},\dotsc,a_{k}\}$ define a
    $k$--dimensional torus $T_K$ as the Cartesian product of CW
    complexes $T_K=\prod_{i=1}^k S^1_{a_{i}}$ and observe that $T_K$ can be
    identified as a combinatorial subcomplex of $T$. Then the
    \emph{Salvetti complex associated with $A(\Gamma)$} is  
    \[
    X(\Gamma) \ = \ \bigcup\set{T_K\subseteq T\mid K \text{ a complete subgraph of
      }\Gamma}. 
    \]
    Thus $X(\Gamma)$ has a single $0$--cell and $n$ $1$--cells. Each edge
    $(a,b)\in E(\Gamma)$ contributes a square $2$--cell to $X(\Gamma)$ with
    the attaching map $aba^{-1}b^{-1}$. In general each complete
    subgraph $K\subseteq \Gamma$ contributes a $k$--dimensional cell to
    $X(\Gamma)$ where $k=\#V(K)$. 
  \end{definition}

  \begin{definition}[Fat Salvetti complex]
    Given $\epsilon > 0$, define the \emph{fat Salvetti complex} associated
    to $A(\Gamma)$ to be the open $\epsilon$--neighborhood of $X(\Gamma)$
    in the torus $T$, denoted $\fat(\Gamma)$. There is a deformation
    retraction of $\fat(\Gamma)$ onto $X(\Gamma)$ for $\epsilon$
    sufficiently small. Fix such an $\epsilon$ from now on. 
  \end{definition}

  The fat Salvetti complex is convenient for carrying out an easy
  transversality argument. One could alternatively use an approach to
  transversality similar to those in \cite{Rourke, Stallings, BF}. 

  Following~\cite{McWise} we now introduce a useful tool for computing the
  Euler characteristic of a $2$--di\-men\-si\-o\-nal complex.

  \begin{definition}[Corners, Angled $2$--complex]
    Let $X$ be a locally finite combinatorial $2$--complex and $v$ a
    $0$--cell of $X$. We will refer to the edges of $\Link(v)$ as
    \emph{corners} of $v$. $X$ is called an \emph{angled $2$--complex} if
    it has an angle $\angle c\in\R$ associated to each corner $c$ of every
    $0$--cell of $X$. 
  \end{definition}

  \begin{definition}[Curvature]
    For every $0$--cell $v$ of $X$, its \emph{curvature} $\k(v)$ is defined
    as 
    \[
    \k(v)=2\pi-\pi\chi(\Link(v))-\sum_{c\in\Corners(v)}\angle c.
    \]
    For every $2$--cell $f$ of $X$, its \emph{curvature} $\k(f)$ is defined
    as 
    \[
    \k(f)=\sum_{c\in\Corners(f)}\angle c- (P(f)-2)\pi,
    \]
    where 
    \[
    \Corners(f) =\{ \text{ edges in $\Link(v)$ contained in $f$ } \mid
    \text{ $v$ is a $0$--cell belonging to $f$ }\}. 
    \]
    and $P(f)$ is the combinatorial length of the boundary of $f$.
  \end{definition}

  The following theorem was proved in~\cite[Theorem 4.6]{McWise}:
  \begin{cgbthm}
  Let $X$ be a compact angled $2$--complex. Then 
  \[\pushQED{\qed}
  2\pi\chi(X)=\sum_{v\in \text{$0$--cells
  }}\k(v)+\sum_{f\in \text{$2$--cells
  }}\k(f).\qedhere\popQED
  \]
  \end{cgbthm}

\section{Surfaces with patterns}

  \label{Sec:Patterns}

  \subsection*{Mapping a surface to a fat Salvetti complex}

  From now on we let $\Gamma$ be a finite simplicial graph. For each
  $a \in V(\Gamma)$ let $e_a$ be the corresponding oriented edge of
  $X(\Gamma)$ (so that $e_a$, considered as a based loop in $X(\Gamma)
  \subset \fat(\Gamma)$, represents the element $a$ of
  $\pi_1(\fat(\Gamma)) = A(\Gamma)$). 

  Recall that the subcomplex of $X(\Gamma)$ determined by $e_a$ is a circle
  $S^1_a$, and note that there is a retraction $r_a \from \fat(\Gamma) \to
  S^1_a$ which is the restriction of the projection map from the torus $T$
  to the factor $S^1_a$. We define the \emph{hyperplane dual to $a$} in
  $\fat(\Gamma)$ to be the pre-image of the midpoint of $e_a$ under this
  retraction. It is denoted $H_a$. Note that this is not quite the usual
  definition of hyperplane, because we are working in the fat Salvetti
  complex $\fat(\Gamma)$. In fact $\fat(\Gamma)$ is a manifold and the
  hyperplanes $H_a$ are codimension-one submanifolds. 

  Let $f \from S \to \fat(\Gamma)$ be an admissible surface for $g \in
  A(\Gamma)$. The composition $S \to \fat(\Gamma) \into T$ is the product
  of the component maps $f_a = r_a \circ f$, since $T = \prod_{a} S^1_a$.
  Each of these maps $f_a \from S \to S^1_a$ may be changed by an
  arbitrarily small homotopy to arrange that the midpoint of $e_a$ is a
  regular value of $f_a$. Since $\fat(\Gamma)$ is open in $T$, this can be
  achieved for all $a$ by homotopies such that the product homotopy is a
  homotopy of $f$ \emph{inside $\fat(\Gamma)$}. Thus, after such a homotopy
  of $f$, we may assume that $f$ is simultaneously transverse to all of the
  hyperplanes $H_a$. Then, for each $a$, the pre-image $M_a = f^{-1}(H_a)$
  is a compact properly embedded one-dimensional submanifold of $S$. The
  submanifolds $M_a$ and $M_b$ may intersect, but they will only do so
  transversely, in the interior of $S$, and only if $(a,b)$ is an edge of
  $\Gamma$. If $(a,b)$ is not an edge, then $M_a$ and $M_b$ will be
  disjoint because $H_a \cap H_b = \emptyset$. 

  Each component of $M_a$ comes with a transverse orientation (i.e.\ a
  choice of normal direction) which we label by the generator $a$. This is
  the transverse orientation induced by the orientation of $e_a$ under the
  map $r_a \circ f$. The opposite transverse orientation is labeled by
  $a^{-1}$. Any small arc $\alpha$ in $S$ that crosses $M_a$ in one point
  will map by $f$ to an arc that crosses $H_a$ in one point. The direction
  that it crosses in will agree with $e_a$ if and only if $\alpha$ is
  oriented with the transverse orientation of $M_a$. 

  More generally, any path $\alpha$ in $S$ which crosses the submanifolds
  $M_a$ transversely in distinct points has a corresponding word
  $w_{\alpha}$ in the generators of $A(\Gamma)$ and their inverses; the
  letters of $w_{\alpha}$ are the labels assigned to the transverse
  orientations followed by $\alpha$ as it passes through each crossing. If
  $\alpha$ is a based loop with basepoint $p$ disjoint from the
  submanifolds $M_a$, then the word $w_{\alpha}$ represents the element $[f
  \circ \alpha]$ in $\pi_1(\fat(\Gamma),f(p))$.

  In particular, the oriented boundary of $S$ crosses the endpoints of
  the manifolds $M_a$ transversely, and has an associated cyclic word
  $w_{\partial S}$. This word represents the conjugacy class of $g$ in
  $\pi_1(\fat(\Gamma))$. 

  \subsection*{Simplification\label{Sec:simp}}

  We now have a compact surface $S$ together with what we call a
  \emph{pattern on $S$}: for each $a \in V(\Gamma)$, a properly embedded
  submanifold $M_a$ of $S$ with a choice of transverse orientation (labeled
  $a$) on each component. The surface with pattern $(S, \{M_a\}_{a \in
  V(\Gamma)})$ satisfies:  

  \begin{enumerate}[label=T\arabic*:, ref=T\arabic*]
    \item \label{t1} if $M_a$ and $M_b$ intersect, then $(a,b)$ is an edge
      of $\Gamma$, and the intersections are transverse and occur in the
      interior of $S$; 
    \item \label{t2} the cyclic word $w_{\partial S}$ given by the
      transverse labels along $\partial S$ represents the conjugacy class
      of $g$ in $A(\Gamma)$.  
  \end{enumerate}

  At this point we have no further need for the continuous map $f \from S
  \to \fat(\Gamma)$. We will simplify both $S$ and the pattern $\{M_a\}_{a
  \in V(\Gamma)}$ by applying some moves. These moves will preserve
  properties \ref{t1} and \ref{t2}. These two properties, along with the
  additional properties achieved by the moves, are all that will be
  needed to estimate $\chi(S)$. In order to describe the moves we need
  some further terminology. 

  \begin{definition} \label{Def:polygon}
    A \emph{pattern curve} is a connected component of $M_a$ for any
    $a \in V(\Gamma)$. A pattern curve is called a
    \emph{pattern arc} if it is homeomorphic to an interval, and a
    \emph{pattern loop} if it is homeomorphic to a circle. 

    An \emph{intersection point} is a point of intersection either between
    two pattern curves, or between a pattern curve and $\partial S$. 

    Now let $\mathcal{M}$ be any union of pattern curves. If we cut
    $S - \partial S$ along $\mathcal{M}$, we get pieces, some of which
    may be open disks. Each such disk has a characteristic map $D^2 \to S$ 
    giving $D^2$ the structure of a polygon with some number of sides.
    Here, a $\emph{side}$ is a maximal connected subset of $\partial D^2$
    mapping into a single pattern curve of $\mathcal{M}$ or into $\partial
    S$. Informally, we say that the original open disk is a polygon
    with that number of sides (even though distinct sides of $D^2$ may
    map to the same arc in $S$). 
  \end{definition}

  \begin{definition}

    A \emph{bigon} is a complementary component of $\mathcal{M}$ in $S
    - \partial S$ which is a polygon with two sides (for some
    $\mathcal{M}$, which may be taken to be just one or two pattern
    curves). A bigon is called \emph{innermost} if every pattern curve
    that intersects one side also intersects the other side.  

    A \emph{half-bigon} is a complementary component of $\mathcal{M}$
    which is a polygon with three sides, one in $\partial S$. Call the
    other two sides \emph{interior} sides. A half-bigon is called
    \emph{innermost} if every pattern curve that intersects one
    interior side also intersects the other interior side. 

  \end{definition}

  \begin{remark} \label{innermost}

    If a bigon $B$ is not innermost, then it properly contains
    a smaller bigon. Similarly, if a half-bigon $B$ is not innermost, then it
    properly contains either a half-bigon or a bigon. 
    It follows that if there are no innermost bigons or half-bigons,
    then there are no bigons or half-bigons at all. 

  \end{remark} 

  The moves are as follows: 

  \begin{enumerate}
    \item \label{m2} Remove an innermost bigon (whose sides are not
      in $\partial S$). See Figure \ref{Fig:bigon}. 

      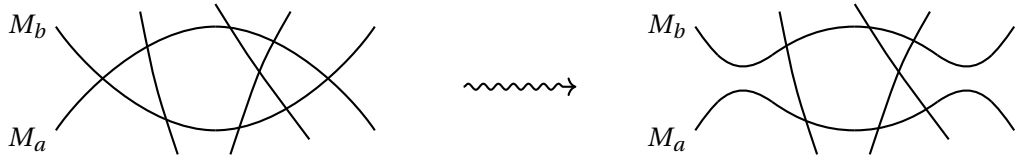
\begin{figure}[htbp!]
      \begin{center}	
      \begin{tikzpicture}[scale=1.0,thick]
				\draw (0.88,2.12) .. controls (1.2,2.6) and (2.1,3.5) .. (3,3.5);
				\draw (3,3.5) .. controls (3.9,3.5) and (4.8,2.6) .. (5.12,2.12);
				\draw (0.88,3.5) .. controls (1.2,3.02) and (2.1,2.12) .. (3,2.12);
				\draw (3,2.12) .. controls (3.9,2.12) and (4.8,3.02) .. (5.12,3.5);
				\draw (2,3.7) .. controls (2.2,2.7) .. (2.5,1.8);
				\draw (3,3.8) .. controls (3.5,3) .. (4.25,2);
				\draw (4,3.7) .. controls (3.6,3) .. (3.2,1.8);
        \draw (0.5,3.5) node {$M_b$};
        \draw (0.5,2.0) node {$M_a$};
				\draw [->,decorate,decoration={snake,amplitude=.4mm,segment length=2mm,post length=1mm}] (6.3,2.7)--(7.8,2.7);
				\draw (9.38,2.12) .. controls (9.54,2.35) .. (9.68,2.5);
				\draw (10.4,2.5) .. controls (10.75,2.25) and (11.1,2.12) .. (11.5,2.12);
				\draw (9.68,2.5) .. controls (9.9,2.7) and (10.11,2.7) .. (10.4,2.5);
				\draw (13.62,2.12) .. controls (13.46,2.35) .. (13.32,2.5);
				\draw (12.6,2.5) .. controls (12.25,2.25) and (11.9,2.12) .. (11.5,2.12);
				\draw (13.32,2.5) .. controls (13.1,2.7) and (12.89,2.7) .. (12.6,2.5);
				\draw (9.38,3.5) .. controls (9.54,3.27) .. (9.68,3.12);
				\draw (10.4,3.12) .. controls (10.75,3.37) and (11.1,3.5) .. (11.5,3.5);
				\draw (9.68,3.12) .. controls (9.9,2.92) and (10.11,2.92) .. (10.4,3.12);
				\draw (13.62,3.5) .. controls (13.46,3.27) .. (13.32,3.12);
				\draw (12.6,3.12) .. controls (12.25,3.37) and (11.9,3.5) .. (11.5,3.5);
				\draw (13.32,3.12) .. controls (13.1,2.92) and (12.89,2.92) .. (12.6,3.12);				
				\draw (10.5,3.7) .. controls (10.7,2.7) .. (11,1.8);
				\draw (11.5,3.8) .. controls (12,3) .. (12.75,2);
				\draw (12.5,3.7) .. controls (12.1,3) .. (11.7,1.8);
				\draw (9.0,3.5) node {$M_b$};
        \draw (9.0,2.0) node {$M_a$};
      \end{tikzpicture}
      \caption{\label{Fig:bigon}Removing an innermost bigon (Move \ref{m2})}
      \end{center}
      \end{figure}
          
    \item \label{m3} Remove an innermost half-bigon. See Figure
      \ref{Fig:half-bigon}. 
 
    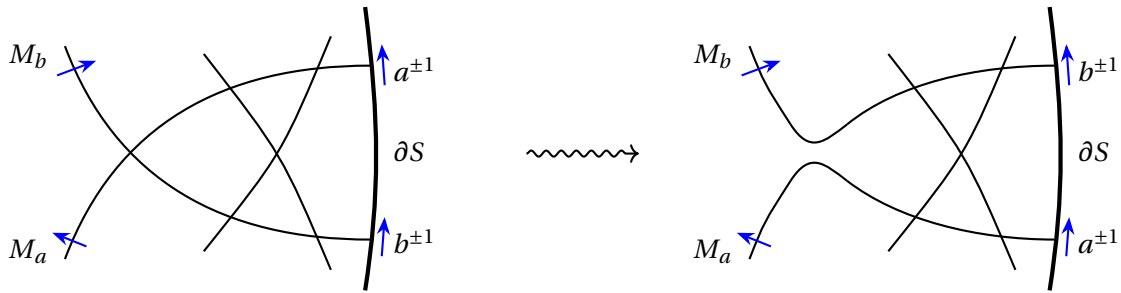
\begin{figure}[htbp!]
    \begin{center}	
    \begin{tikzpicture}[scale=1.3,thick]
      \draw (0.88,1.62) .. controls (1.4,3) and (2.5,3.6) .. (4,3.6);
      \draw (0.88,3.8) .. controls (1.4,2.42) and (2.5,1.82) .. (4,1.82);
      \draw [ultra thick] (3.95,1.3) .. controls (4.1,2.27) and (4.1,3.03) .. (3.95,4.2);
			
			\draw (0.5,3.7) node {$M_b$};
      \draw (0.5,1.7) node {$M_a$};	
      \draw (4.4,2.72) node {$\dS$};
      \draw [blue, arrows={-Stealth}] (0.8,3.5)--(1.2,3.65);
      \draw [blue, arrows={-Stealth}] (1.1,1.75)--(0.75,1.89);
      \draw [blue, arrows={-Stealth}] (4.15,3.4)--(4.12,3.82); \draw (4.45,3.56) node {$a^{\pm1}$};
      \draw [blue, arrows={-Stealth}] (4.12,1.65)--(4.15,2.05); \draw (4.45,1.8) node {$b^{\pm1}$};
			
			\draw (2.3,1.7) .. controls (3.1,2.7) .. (3.6,3.9);
			\draw (2.3,3.72) .. controls (3.1,2.69) .. (3.6,1.51);
						
      \draw [->,decorate,decoration={snake,amplitude=.4mm,segment length=2mm,post length=1mm}] (5.6,2.7)--(6.75,2.7);
      
      \draw [ultra thick] (10.95,1.3) .. controls (11.1,2.27) and (11.1,3.03) .. (10.95,4.2);
			\draw (7.88,1.62) .. controls (8.03,2) .. (8.3,2.42);
			\draw (8.88,2.42) .. controls (9.4,2.02) and (10.2,1.82) .. (11,1.82);
			\draw (8.3,2.42) .. controls (8.5,2.73) and (8.646,2.6) .. (8.88,2.42);
 			\draw (7.88,3.8) .. controls (8.03,3.42) .. (8.3,3);
			\draw (8.88,3) .. controls (9.4,3.42) and (10.2,3.6) .. (11,3.6);
			\draw (8.3,3) .. controls (8.5,2.69) and (8.646,2.82) .. (8.88,3);
			
			\draw (7.5,3.7) node {$M_b$};
      \draw (7.5,1.7) node {$M_a$};	
      \draw (11.4,2.72) node {$\dS$};
      \draw [blue, arrows={-Stealth}] (7.8,3.5)--(8.2,3.65);
      \draw [blue, arrows={-Stealth}] (8.1,1.75)--(7.75,1.89);
      \draw [blue, arrows={-Stealth}] (11.15,3.4)--(11.12,3.82); \draw (11.45,3.56) node {$b^{\pm1}$};
      \draw [blue, arrows={-Stealth}] (11.12,1.65)--(11.15,2.05); \draw (11.45,1.8) node {$a^{\pm1}$};
			
			\draw (9.3,1.7) .. controls (10.1,2.7) .. (10.6,3.9);
			\draw (9.3,3.72) .. controls (10.1,2.69) .. (10.6,1.51);

    \end{tikzpicture}	
    \caption{\label{Fig:half-bigon}Removing an innermost half-bigon (Move \ref{m3})}
    \end{center}
    \end{figure}
          
    \item \label{m4.5} Splice together two endpoints of $M_a$ that land
      on adjacent cancellable letters of $w_{\partial S}$. See Figure
      \ref{Fig:splice}. 

    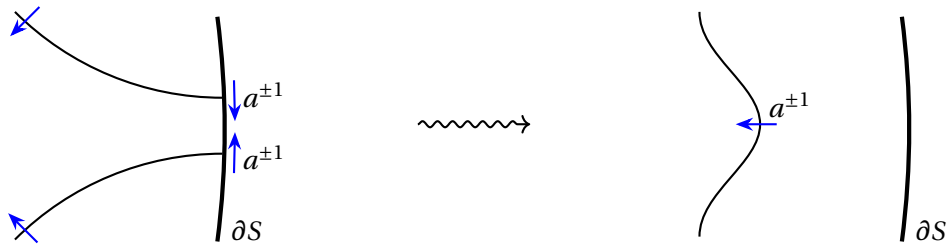
\begin{figure}[htbp!]
    \begin{center}
    \begin{tikzpicture}[scale=1.3,thick]
      \draw (0.88,1.52) arc [start angle=135, end angle=90, x radius=3, y radius=3];
      \draw (0.88,3.85) arc [start angle=225, end angle=270, x radius=3, y radius=3];
      \draw [ultra thick] (2.95,1.5) .. controls (3.05,2.27) and (3.05,3.03) .. (2.95,3.8);
      \draw (3.25,1.62) node {$\dS$};
      \draw [blue, arrows={-Stealth}] (1.13,3.9)--(0.83,3.6);
      \draw [blue, arrows={-Stealth}] (1.11,1.49)--(0.81,1.79);
      \draw [blue, arrows={-Stealth}] (3.12,2.2)--(3.13,2.62); \draw (3.42,2.36) node {$a^{\pm1}$};
      \draw [blue, arrows={-Stealth}] (3.12,3.15)--(3.13,2.73); \draw (3.42,3.0) node {$a^{\pm1}$};
      \draw [->,decorate,decoration={snake,amplitude=.4mm,segment length=2mm,post length=1mm}] (5.0,2.7)--(6.15,2.7);
      \draw (7.88,3.85) .. controls (7.88,3.4) and (8.5,3.1) .. (8.5,2.7)
                                                                              .. controls (8.5,2.3) and (7.88,2.0) .. (7.88,1.55);
      \draw [blue, arrows={-Stealth}] (8.67,2.7)--(8.25,2.7);
      \draw (8.8,2.9) node {$a^{\pm1}$};
      \draw [ultra thick] (9.95,1.5) .. controls (10.05,2.27) and (10.05,3.03) .. (9.95,3.8);
      \draw (10.25,1.62) node {$\dS$};
    \end{tikzpicture}
    \caption{\label{Fig:splice}Splicing adjacent endpoints (Move \ref{m4.5})}
    \end{center}	
    \end{figure}		
            
    \item \label{m1} Discard any pattern loop. 

    \item \label{m5} Perform surgery along a non-separating simple closed
      curve in $S$ that is disjoint from every pattern curve; that is, replace an
      annular neighborhood (also chosen disjoint from every pattern curve) with
      two disks. 
  \end{enumerate}

  \begin{remark}
    The only move that changes the topology of $S$ is move \ref{m5}. This
    move increases $\chi(S)$ by $2$, but preserves the fact that $S$ is
    connected and has one boundary component. Such a surface has Euler
    characteristic at most $1$.
  \end{remark}

  \begin{remark}
    The only moves that change $w_{\partial S}$ are moves \ref{m3} and
    \ref{m4.5}. Move \ref{m3} exchanges two adjacent letters of
    $w_{\partial S}$ which are commuting elements of $A(\Gamma)$. Move
    \ref{m4.5} removes a subword $a a^{-1}$ or $a^{-1} a$ from $w_{\partial
    S}$. In both cases, $w_{\partial S}$ still represents $g$ after the
    move (that is, property \ref{t2} is preserved by all moves). 
  \end{remark}

  \begin{remark}

    In moves \ref{m2} and \ref{m3}, the new pattern curves
    after the move cross exactly the same pattern curves that they did
    before the move, by the innermost property of the bigon or
    half-bigon. Thus property \ref{t1} is preserved by all moves (the
    other cases being obvious). 

  \end{remark}

  Now define the \emph{complexity} of $(S, \{M_a\}_{a \in V(\Gamma)})$ to
  be the sum of three quantities: the total number of pattern curves,
  the total number of intersection points (including intersections
  with $\partial S$), and $1-\chi(S)$. The complexity is a
  non-negative integer. 

  \begin{lemma}
    Each of the moves \ref{m2}--\ref{m5} decreases the complexity of\/ $(S,
    \{M_a\}_{a \in V(\Gamma)})$. 
  \end{lemma}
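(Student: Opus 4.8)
The plan is to go through the five moves one at a time, tracking separately the three nonnegative integers whose sum is the complexity: the number $P$ of pattern curves, the number $I$ of intersection points (between pattern curves, and between pattern curves and $\partial S$), and $c = 1 - \chi(S)$. For each move it suffices to check that at least one of $P$, $I$, $c$ strictly decreases while none of them increases. Since property \ref{t1} guarantees all intersections are transverse, the relevant counts are all finite and well-behaved under the moves.

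Move \ref{m5} is the only move that changes $\chi(S)$: surgery along a non-separating simple closed curve replaces an annular neighbourhood of Euler characteristic $0$ by two disks, so $\chi(S)$ goes up by $2$ and $c$ goes down by $2$; since the curve and the annulus are chosen disjoint from every pattern curve, $P$ and $I$ do not change, so the complexity drops by $2$. Move \ref{m1} leaves $S$ alone and removes one pattern loop, so $P$ drops by $1$, while deleting the intersection points (if any) that lay on that loop can only decrease $I$; the complexity drops by at least $1$. For move \ref{m4.5}, $\chi(S)$ is unchanged; the two spliced endpoints lie on $\partial S$, so they are intersection points that disappear, and since the two endpoints are \emph{adjacent} along $\partial S$ the short splicing arc may be chosen inside a thin collar of the boundary segment between them, meeting no other pattern curve, so no new intersection point is created and $I$ drops by exactly $2$; if the two endpoints belonged to a single pattern arc that arc becomes a pattern loop and $P$ is unchanged, otherwise two pattern arcs are joined and $P$ drops by $1$. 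In every case the complexity strictly drops.

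It remains to treat moves \ref{m2} and \ref{m3}. Each is realized by an isotopy that pushes one of the two pattern curves bounding the bigon (respectively, the half-bigon) across it, which alters neither $\chi(S)$ nor the number of pattern curves, so $c$ and $P$ stay the same. A bigon whose sides avoid $\partial S$ has exactly two corners, both of which are intersection points between its two bounding pattern curves (distinct pattern curves, since each $M_a$ is embedded); a half-bigon has three corners, two lying on $\partial S$ and one — the corner not on $\partial S$ — an intersection point between its two interior pattern curves. The assertion is that the pushing isotopy removes precisely these corner intersections (two for a bigon, one for a half-bigon) and neither creates nor destroys any other intersection point; granting this, $I$, and hence the complexity, drops by $2$ (respectively $1$).

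The crux of the argument — the one step that is not mere bookkeeping — is this last assertion about moves \ref{m2} and \ref{m3}, and it is exactly here that the \emph{innermost} hypothesis is needed. I would verify it by examining how the remaining pattern curves meet the bigon or half-bigon: the innermost condition is precisely what ensures that every such curve is carried along by the pushing isotopy without any of its crossings changing, so that only the corner intersections are affected. This refines the observation already recorded in the remarks that moves \ref{m2} and \ref{m3} preserve property \ref{t1} — there one needs only that the \emph{set} of pattern curves crossed by each curve is unchanged, whereas here one needs in addition that the \emph{number} of crossings is unchanged apart from the corners — and, together with Remark \ref{innermost}, is where the real care has to go.
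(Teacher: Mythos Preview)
Your proof is correct and follows essentially the same move-by-move bookkeeping as the paper's own argument. Your discussion of moves \ref{m2} and \ref{m3}, and in particular your explicit identification of the innermost hypothesis as the one substantive point, is more careful than the paper's terse assertion that these moves reduce the number of intersection points.
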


  \begin{proof}
    Moves \ref{m2} and \ref{m3} reduce the number of intersection points
    without changing the other two components of complexity. Move
    \ref{m4.5} reduces the number of intersection points by $2$, and
    possibly also the number of pattern curves (unless it turns an 
    arc into a loop). It does not change $1 - \chi(S)$.
    Move \ref{m1} reduces the number of pattern curves, and possibly
    also the number of intersection points. It leaves $1 - \chi(S)$
    unchanged. Move \ref{m5} reduces $1-\chi(S)$ by $2$ without
    changing the other two quantities. 
  \end{proof}

  Starting with $(S, \{M_a\}_{a \in V(\Gamma)})$, one can perform moves, in
  any order, until the complexity cannot be reduced any further. Since no
  moves are available, we may conclude several things (in addition to
  properties \ref{t1}, \ref{t2}): 

  \begin{enumerate}[label=T\arabic*:, ref=T\arabic*, start=3]
    \item \label{t3} the word $w_{\partial S}$ is cyclically reduced (or
      move \ref{m4.5} could be performed);
    \item \label{t4} there are no bigons or half-bigons:

      An innermost half-bigon, or an innermost bigon not on $\partial
      S$, cannot exist (or move \ref{m2} or \ref{m3} is
      available). An innermost bigon on $\partial S$ either 
      contains a half-bigon (which contains an innermost half-bigon or
      bigon, which is a contradiction), or makes move \ref{m4.5}
      available. The claim now follows from Remark \ref{innermost}. 

    \item \label{t5} the union $\bigcup_{a \in V(\Gamma)} M_a$ cuts $S$
      into disks:

      Consider a simple closed curve in $S - \partial S$ that is
      disjoint from 
      $\mathcal{M} = \bigcup_{a \in V(\Gamma)} M_a$. If it is
      non-separating then move \ref{m5} is available. If it is
      separating then $\mathcal{M}$ lies entirely on one side, since
      every pattern curve is a pattern arc meeting $\partial S$. The
      other side either is a disk or admits move~\ref{m5}. 

    \item \label{t6} if $\Gamma$ has no triangles then each of these
      disks is a polygon with at least four sides:

      The number of sides cannot be $1$ since there are no pattern
      loops. It cannot be $2$ because there are no bigons. If a
      polygon has three sides and is not a half-bigon, then the sides
      are on $M_a$, $M_b$, $M_c$ with $a, b, c$ forming a triangle in
      $\Gamma$. 

  \end{enumerate}

  \begin{definition}
    A surface with pattern $(S, \{M_a\}_{a \in V(\Gamma)})$ satisfying
    properties \ref{t1} and \ref{t2}, of smallest complexity, is called a
    \emph{taut surface with pattern for $g$}. It will then also satisfy
    \ref{t3}--\ref{t6}. 
  \end{definition}

  We have just proved:

  \begin{proposition} \label{Prop:taut}
    Let $S_0$ be an admissible surface for $g$ in $\fat(\Gamma)$. Then
    there exists a taut surface with pattern $(S, \{M_a\}_{a \in
    V(\Gamma)})$ for $g$ such that $1 - \chi(S_0) \geq 1 -
    \chi(S)$. \qed 
  \end{proposition}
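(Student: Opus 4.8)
The plan is to produce $(S,\{M_a\})$ by running the simplification procedure of \secref{Patterns} to completion, starting from $S_0$, and then to observe that the Euler characteristic never decreases along the way.

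First I would note that $S_0$ already carries a suitable pattern: by the transversality discussion above, after homotoping the admissible map $f\from S_0\to\fat(\Gamma)$ inside $\fat(\Gamma)$ so that $f$ is transverse to every hyperplane $H_a$, the submanifolds $M_a=f^{-1}(H_a)$ with their induced transverse labels make $(S_0,\{M_a\}_{a\in V(\Gamma)})$ satisfy \ref{t1} (since $H_a\cap H_b=\emptyset$ unless $(a,b)\in E(\Gamma)$, and the intersections occur transversely in the interior of $S_0$) and \ref{t2} (since the cyclic word $w_{\partial S_0}$ represents the conjugacy class of $g$). I would then apply moves \ref{m2}--\ref{m5}, in any order, as long as any is available. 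By the preceding Lemma each move strictly decreases the complexity, which is a non-negative integer; hence the process terminates after finitely many moves at a surface with pattern $(S,\{M_a\})$ admitting no further move. The Remarks show that \ref{t1} and \ref{t2} are preserved at every step, and the analysis following the definition of complexity shows that, no move being available, $(S,\{M_a\})$ satisfies \ref{t3}--\ref{t6}; thus it is a taut surface with pattern for $g$. Since move \ref{m5} keeps $S$ connected with one boundary component and the remaining moves do not change the topological type of $S$ at all, $S$ is again of the form required of an admissible surface.

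It remains to compare Euler characteristics. Among the five moves, only move \ref{m5} alters $\chi(S)$, and it raises it by $2$, replacing an annulus by two disks. Consequently $\chi$ is non-decreasing along the simplification, so $\chi(S)\ge\chi(S_0)$, i.e.\ $1-\chi(S_0)\ge 1-\chi(S)$, which is the asserted inequality. The only point requiring any care is that the procedure terminates regardless of the order in which moves are applied, and this is immediate from strict monotonicity of the integer-valued, non-negative complexity; so there is no real obstacle here, the proposition being in effect a bookkeeping summary of the construction carried out above.
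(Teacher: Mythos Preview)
Your proposal is correct and is essentially identical to the paper's argument: the proposition is stated with a \qed immediately after it, the preceding text reading ``We have just proved,'' so the paper's proof is precisely the transversality-then-simplify procedure you describe, with termination guaranteed by the complexity lemma and the Euler characteristic inequality coming from the remark that only move~\ref{m5} changes $\chi(S)$ (increasing it by $2$). There is nothing to add.
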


  \subsection*{Bands}

  Let $(S, \{M_a\}_{a\in V(\Gamma)})$ be a taut surface with pattern
  for $g$. We organize the pattern arcs into regions called bands. 

  \begin{definition} (Band)
    A \emph{rectangle} is an embedded disk in $S$ whose boundary is
    decomposed into four sides, such that two opposite sides are contained
    in $\partial S$, and the other two sides are pattern arcs. We also
    allow a rectangle to be degenerate, consisting of a single pattern arc.
    The sides in $\partial S$ are called the \emph{boundary sides} and the
    other two sides are called the \emph{interior sides} of the rectangle.
    \emph{A band} is a rectangle in $S$ which is maximal with respect to
    inclusion. Since degenerate rectangles are allowed, every pattern arc
    is contained in a band. 
  \end{definition}

  \begin{remark}\label{Rem:arc-equiv}

    Suppose $C_1$ and $C_2$ are pattern arcs that are topologically
    parallel, meaning that they are homotopic as maps of pairs
    $(I, \partial I) \to (S, \partial S)$. Then they must be disjoint
    and form part of the boundary of a rectangle $R$, since otherwise
    there would be a half-bigon. 

    If $C_3$ is a third pattern arc that is topologically parallel to
    the other two, then either it lies in $R$ between $C_1$ and $C_2$,
    or it cobounds a second rectangle $R'$ with either $C_1$ or $C_2$. In
    this case, $R \cup R'$ is a rectangle containing all three pattern
    arcs. More generally, if $C_1, \dotsc, C_k$ is a maximal family of
    pairwise parallel pattern arcs, then there is a band bounded by
    two of them, and all the others lie inside the band. Thus bands
    correspond to equivalence classes of pattern arcs under the
    relation of parallelism. 

  \end{remark}

  \begin{remark}\label{Rem:band-words}
    Let $B$ be a band, and suppose the pattern arcs inside it are
    $C_1, \dotsc, C_k$, numbered in order along $\partial S$ in one
    boundary side of the band. Let $a_1^{\epsilon_1}, \dotsc,
    a_k^{\epsilon_k}$ be their transverse labels along this side. Then
    the opposite endpoints of $C_1, \dotsc, C_k$ lie along the other
    boundary side of $B$ and their transverse labels are
    $a_k^{-\epsilon_k}, \dotsc, a_1^{-\epsilon_1}$ in order along
    $\partial S$, because $S$ is orientable (see Figure \ref{Fig:band}). Moreover, no other
    pattern arc can meet $\partial S$ in one of the boundary sides of
    $B$, since it would then form a half-bigon with some
    $C_i$. Therefore $w_{\partial S}$ contains $u = a_1^{\epsilon_1} \dotsm
    a_k^{\epsilon_k}$ and $u^{-1} = a_k^{-\epsilon_k} \dotsm
    a_1^{-\epsilon_1}$ as disjoint subwords. In fact $w_{\partial S}$
    is partitioned into these subwords, since every letter is the
    label of one end of a pattern arc, and the set of arcs is
    partitioned by the bands. 
  \end{remark}

  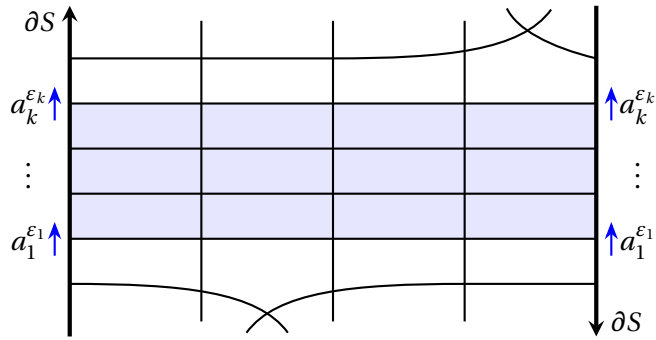
\begin{figure}[htbp!]
  \begin{center}
  \begin{tikzpicture}[scale=1.0,>=stealth,thick]
    \begin{scope}[color=blue!10]
      \fill (1,3.4)--(8,3.4)--(8,1.6)--(1,1.6);
    \end{scope}
    \draw [->,ultra thick] (1,0.3)--(1,4.7);
    \draw [<-,ultra thick] (8,0.3)--(8,4.7);
    \draw (1,1.6)--(8,1.6);
    \draw (1,2.2)--(8,2.2);
    \draw (1,2.8)--(8,2.8);
    \draw (1,3.4)--(8,3.4);
    \draw (2.75,0.5)--(2.75,4.5);
    \draw (4.5,0.5)--(4.5,4.5);
    \draw (6.25,0.5)--(6.25,4.5);
    \draw (1,4)--(4.5,4);
    \draw (4.5,4) .. controls (6.0,4.0) and (7.0,4.1) .. (7.4,4.65);
    \draw (8,4.0) arc [start angle=237, end angle=190, x radius=2.7, y radius=1];
    \draw (1,1) .. controls (2.5,1) and (3.5,0.9) .. (3.9,0.35);
    \draw (6.25,1) .. controls (4.75,1) and (3.75,0.9) .. (3.35,0.35);
    \draw (6.25,1)--(8,1);
    \draw (0.6,4.5) node {$\dS$}; \draw (8.4,0.5) node {$\dS$};
    \draw [blue, arrows={-Stealth}] (0.8,3.175)--(0.8,3.625);
    \draw [blue, arrows={-Stealth}] (8.2,3.175)--(8.2,3.625);
    \draw [blue, arrows={-Stealth}] (0.8,1.375)--(0.8,1.825);
    \draw [blue, arrows={-Stealth}] (8.2,1.375)--(8.2,1.825);
    \draw (0.45,1.55) node {$a_1^{\epsilon_1}$}; \draw (8.55,1.55) node {$a_1^{\epsilon_1}$}; 
    \draw (0.45,3.35) node {$a_k^{\epsilon_k}$}; \draw (8.55,3.35) node {$a_k^{\epsilon_k}$}; 
    \draw (0.45,2.55) node {$\vdots$}; \draw (8.55,2.55) node {$\vdots$}; 
  \end{tikzpicture}
  \caption{\label{Fig:band}The shaded region is a band, joining two
    subwords $u, u^{-1}$ of $w_{\partial S}$. In this example $\Gamma$
    has no triangles and so every face has four or more sides. }
  \end{center}
  \end{figure}

  \begin{remark}\label{Rem:band-special-face}
    We make one last observation in the case where $\Gamma$ has no
    triangles, to be used in the proof of \thmref{2}. By property
    \ref{t6}, every polygonal face of $S$ has four or more sides. 
    Let $R$ be a rectangle and $C$ one of its interior sides. Consider the
    faces of $S$ that are outside $R$ but have a side in $C$. If all of
    these faces have four sides, then the closure of their union is a
    rectangle $R'$, and $R \cup R'$ is also a rectangle, properly
    containing $R$. Therefore, for any band $B$, each of its interior sides
    is adjacent to at least one face with 5 or more sides. See again
    Figure \ref{Fig:band}. 
  \end{remark}

  Recall now that our goal is to estimate $\scl(g)$. To do this we need to
  estimate $\cl(g^n)$ in terms of $n$. The following theorem, proved
  in~\secref{NoOv}, provides the connection to the exponent $n$. 

  \begin{keylem}
    Let $w$ be a cyclically reduced word in the generators of $A(\Gamma)$
    representing the conjugacy class of the element $g^n$ in $A(\Gamma)$,
    and suppose $u$ is a word such that both $u$ and $u^{-1}$ appear as
    subwords of $w$ (considered as a cyclic word). Then 
    \[ |u| \ \le \ \frac{|w|}{2n}. \]
  \end{keylem}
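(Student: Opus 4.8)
The plan is to prove the non-overlapping property by working in the universal cover of the Salvetti complex, which is a CAT(0) cube complex, and to leverage the theory of essential characteristic sets together with Haglund--Wise specialness. The statement is really about what a word representing $g^n$ can look like, so the first reduction is to translate it into a statement about a geodesic (or combinatorial geodesic) $\gamma$ in the CAT(0) cube complex $\widetilde{X(\Gamma)}$: a cyclically reduced word $w$ representing $g^n$ corresponds to a combinatorial loop in $X(\Gamma)$, and lifting it gives a path in $\widetilde{X(\Gamma)}$ that is a concatenation of $n$ translates of a fundamental segment by the (combinatorial) translation axis of $g^n$. The length of $w$ is the combinatorial length of this path, and the hypothesis that $u$ and $u^{-1}$ both appear as subwords of the cyclic word $w$ says that two (disjoint) subsegments of this lift, one of which is the ``reverse'' of the other, have the same length $|u|$.

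\medskip

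**Next I would** make precise the sense in which ``$u$ and $u^{-1}$ appear'' forces a combinatorial coincidence. In the free case, a subword $u$ of $w$ picks out a sequence of hyperplanes crossed in order; the reverse word $u^{-1}$ crosses the same hyperplanes in reverse order. In a RAAG, because $w$ is cyclically reduced, the reading of $w$ along $\partial S$ still picks out an honest sequence of hyperplanes in $\widetilde{X(\Gamma)}$, and the content of the non-overlapping property is that if the same ordered-up-to-reversal sequence of hyperplane-types occurs twice disjointly along $w$, these two occurrences cannot overlap modulo the periodicity of $g^n$ — otherwise one could ``fold'' them against each other and contradict that $w$ represents exactly $g^n$ (and no smaller power, or a conjugate with a shorter representative). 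Concretely, I expect to use the fact that the midpoint-hyperplanes $H_a$ are two-sided and embedded (this is where specialness enters: no self-intersecting, no self-osculating, no inter-osculating hyperplanes), so that a matched pair $u$, $u^{-1}$ produces two disjoint parallel sub-arcs whose combined ``span'' along the axis of $g^n$ must fit inside a single period — giving $2|u| \le |w|/n$, i.e.\ $|u| \le |w|/(2n)$.

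\medskip

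**The key steps, in order,** are: (1) set up the translation axis of $g^n$ in $\widetilde{X(\Gamma)}$ and identify $|w|$ with the combinatorial translation length $n \cdot \ell$ where $\ell$ is the translation length of $g$ on its axis; (2) show a subword $u$ of the cyclic word $w$ corresponds to a sub-segment of the axis (or a path fellow-travelling the axis) of combinatorial length $|u|$, and the appearance of $u^{-1}$ elsewhere gives a second such sub-segment which is the image of the first under an orientation-reversing combinatorial isometry of the relevant convex subcomplex; (3) invoke the essential characteristic set / hyperplane machinery from \cite{FFT} and specialness from \cite{HaglundWise} to conclude that the two sub-segments, once pushed onto the axis, are disjoint and hence their total length $2|u|$ is at most one period $\ell$ times $n$; that is $2|u| \le |w|$ is the $n=1$ case and the periodicity upgrades it to $2n|u| \le |w|$. **The hard part will be** step (3): controlling how the two occurrences of the matched subword interact when the curves $M_a$ are allowed to cross (the genuinely new RAAG phenomenon). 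The danger is that commuting generators let the two sub-arcs ``slide past'' each other so that they appear to overlap along the axis; ruling this out is exactly what forces the use of \emph{all} of the Haglund--Wise axioms (self-osculation and inter-osculation in particular), since an overlap would manifest as a forbidden osculation pattern among hyperplanes of the special cube complex $X(\Gamma)$, or as a non-reduced diagram contradicting cyclic reducedness of $w$. I would organize that step around a minimal-area disk diagram (or a minimal combinatorial annulus realizing the conjugacy) and argue that any overlap produces a reduction, contradicting minimality.
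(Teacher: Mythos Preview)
Your setup is right and matches the paper: work in the universal cover $Y(\Gamma)$, use the essential characteristic set $Y_g$ and the combinatorial axis for $g$, identify $|w|$ with $n\,d(x,gx)$, and translate the appearance of $u$ and $u^{-1}$ into two subpaths of the axis related by an element $f\in A(\Gamma)$ that reverses orientation. Up through your step~(2) you are essentially recapitulating the paper's reduction (their \propref{Essential} and the first paragraph of the proof of \thmref{NoOverlapping}).

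The divergence is in step~(3), and here your plan has a genuine gap. You phrase the goal as ``push the two sub-segments onto the axis and show they are disjoint, hence $2|u|\le \ell$,'' and you propose to prove this via a minimal disk diagram or annular diagram, arguing that an overlap produces a reduction. But the difficulty is precisely that the two occurrences need \emph{not} sit in the same fundamental domain of $\langle g\rangle$, and $w$ itself need not be a literal $n$th power, so there is no direct ``fitting two intervals into one period'' argument. Your ``periodicity upgrades $2|u|\le |w|$ to $2n|u|\le |w|$'' is the heart of the matter and cannot be had for free; this is exactly the step the paper flags as nontrivial. A disk-diagram argument would have to produce, from an overlap, a specific forbidden hyperplane configuration, and you have not indicated what that configuration is or why minimality rules it out.

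What the paper actually does for step~(3) is rather different and worth knowing. First (\lemref{NoOverlapping}) it shows, using the median structure of $Y(\Gamma)$, that if $|u|>d(x,gx)/2$ then after replacing $f$ by some $g^{-k}f$ one can find half-spaces $H,K\in[x,y]$ with $f\ov{K}=H$. This is the step that absorbs the ``periodicity'' issue and is where the hypothesis $|u|>\ell/2$ is really used. Then (\propref{NoOverlapping}) it takes a \emph{longest} chain of nested half-spaces from $H$ to $K$, compares it with its image under $f\,\ov{(\,\cdot\,)}$, and looks at the \emph{midpoints} of the two chains: by maximality these midpoints coincide or cross (\lemref{MaxChains}), and depending on the parity of the chain length this forces one of the four special-cube-complex axioms (no $H=f\ov H$, no $H\pitchfork fH$, no tightly-nested $H,f\ov H$, no tightly-nested pair crossing its $f$-image) to fail. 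That midpoint-of-a-longest-chain trick is the key idea you are missing; it is what makes all four Haglund--Wise axioms enter, and it replaces the diagrammatic reduction you were reaching for.
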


  \begin{corollary}\label{Cor:1}
    Let $(S, \{M_a\}_{a \in V(\Gamma)})$ be a taut surface with pattern for
    $g^n$. Then the total number of bands on $S$ is at least $n$.
  \end{corollary}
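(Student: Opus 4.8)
The plan is to combine the band decomposition of $w_{\partial S}$ recorded in Remark~\ref{Rem:band-words} with Theorem~6.6. First I would recall that, because $(S,\{M_a\})$ is taut, every pattern curve is a pattern arc meeting $\partial S$, and the pattern arcs are partitioned by the bands. By Remark~\ref{Rem:band-words}, each band $B$ contributes a pair of \emph{disjoint} subwords $u_B$ and $u_B^{-1}$ of the cyclic word $w_{\partial S}$, where $u_B$ is the word of transverse labels read along one boundary side of $B$; moreover, as $B$ ranges over all bands, these subwords partition $w_{\partial S}$. Hence, writing $m$ for the number of bands,
\[ |w_{\partial S}| \ = \ \sum_{B}\bigl(|u_B| + |u_B^{-1}|\bigr) \ = \ 2\sum_{B}|u_B|. \]

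Next I would invoke the two properties of a taut surface that feed into Theorem~6.6: by \ref{t3} the word $w_{\partial S}$ is cyclically reduced, and by \ref{t2} it represents the conjugacy class of $g^n$. So Theorem~6.6, applied to $w = w_{\partial S}$ and $u = u_B$ for each band $B$, yields $|u_B| \le |w_{\partial S}|/(2n)$. Summing over the bands and using the identity above,
\[ |w_{\partial S}| \ = \ 2\sum_{B}|u_B| \ \le \ 2m\cdot\frac{|w_{\partial S}|}{2n} \ = \ \frac{m}{n}\,|w_{\partial S}|. \]

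Finally, since $g$ is non-trivial and right-angled Artin groups are torsion-free, $g^n$ is non-trivial, so its cyclically reduced representative $w_{\partial S}$ is a non-empty word and $|w_{\partial S}| > 0$. Dividing the displayed inequality by $|w_{\partial S}|$ gives $m \ge n$, which is the claim.

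I do not expect a genuine obstacle here; the argument is essentially a one-line counting estimate once Theorem~6.6 is in hand. The only two points that need a word of care — both already supplied in the preliminaries — are that the subwords $u_B$ and $u_B^{-1}$ genuinely \emph{partition} $w_{\partial S}$ (not merely embed disjointly), so that the lengths add up as claimed, and that $w_{\partial S}$ is non-empty, so that the final division is legitimate.
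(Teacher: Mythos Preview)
Your proposal is correct and follows essentially the same approach as the paper: partition the letters of $w_{\partial S}$ into the pairs of subwords $u_B, u_B^{-1}$ coming from the bands, apply Theorem~6.6 to bound each $|u_B|$, and compare total lengths. The paper's version is terser (it simply notes that $m$ bands account for $2m$ subwords each of length at most $|w_{\partial S}|/(2n)$), but the content is identical; your explicit remark that $|w_{\partial S}|>0$ because RAAGs are torsion-free is a nice point the paper leaves implicit.
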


  \begin{proof}
    Recall from \remref{arc-equiv} that the bands provide a partition
    of the set of pattern arcs. The subwords $u, u^{-1}$ of
    $w_{\partial S}$ associated to bands then form a partition of the
    letters of $w_{\partial S}$. Each band accounts for two subwords,
    each of length at most $|w_{\partial S}|/(2n)$, so there must be
    at least $n$ bands. 
  \end{proof}

\section{Proof of~\thmref{1}}\label{Sec:Proof1}

  Let $G = A(\Gamma)$ and suppose $g^n \in [G,G]$ for some $n>0$. Let
  $S_0 \to \fat(\Gamma)$ be an admissible surface for $g^n$. By
  \propref{taut} there is a taut surface with pattern $(S, \{M_a\}_{a
  \in V(\Gamma)})$ for $g^n$ such that $1 - \chi(S_0) \geq 1 -
  \chi(S)$.  

  Recall that the \emph{chromatic number} of $\Gamma$ is the smallest 
  number of colors needed to color the vertices of $\Gamma$ so that no
  two adjacent vertices have the same color. Let such a
  coloring with $k$ colors be given. Every pattern arc inherits a
  color from its transverse label, and whenever two pattern arcs
  cross, they must have different colors. In particular, the set of
  pattern arcs having any single color is pairwise disjoint. 

  For each band in $S$, assign it the color of one of its pattern
  arcs. Since there are at least $n$ bands (\corref{1}) and only $k$
  colors, there must be at least $n/k$ bands of the same color $c$,
  for some $c$ (pigeonhole principle). Taking one pattern arc with
  color $c$ from each of these bands, we obtain a collection
  $\mathcal{C}$ of pattern arcs that is pairwise disjoint and
  non-parallel, of size at least $n/k$. 

  The size of any such collection is at most $6\genus(S) - 3$. To
  see this, enlarge $\mathcal{C}$ to a maximal such collection
  $\mathcal{C'}$, which defines an ideal triangulation of $S
  - \partial S$ (or equivalently, a one-vertex triangulation of
  $S/\partial S$). Such a triangulation has $6\genus(S) -3$
  edges. 

  Since
  \[
    \frac{n}{k} \ \leq \ 6\genus(S) - 3
  \]
  we have
  \[
    \genus(S_0) \ \ge \ \genus(S) \ \ge \ \frac n{6k}+\frac12.
  \]
  The right hand side is a lower bound for $\cl(g^n)$, since
  $S_0$ was an arbitrary admissible surface for $g^n$. 
  Dividing by $n$ and taking ${n\to\infty}$, we obtain
  \[
    \scl(g) \ \ge \ \frac 1{6k}.
  \]
  This finishes the proof of~\thmref{1}.

\section{Proof of~\thmref{2}}

  \label{Sec:Proof2}

  Let $G = A(\Gamma)$, where $\Gamma$ has no triangles, and suppose $g^n \in
  [G,G]$ for some $n>0$. As before, let $S_0 \to \fat(\Gamma)$ be an
  admissible surface for $g^n$. By \propref{taut} there is a taut
  surface with pattern $(S, \{M_a\}_{a \in V(\Gamma)})$ for $g^n$ such
  that $1 - \chi(S_0) \geq 1 - \chi(S)$. 

  By property \ref{t5}, $S$ can be given the structure of a combinatorial
  $2$--complex, with $0$--cells equal to the intersection points and
  $1$--skeleton equal to $\bigcup_{a \in V(\Gamma)} M_a \cup \partial S$. 
  Each $2$--cell, or face, is a polygon as described in Definition
  \ref{Def:polygon}. These faces each have four or more sides by property
  \ref{t6}. We further endow $S$ with the structure of an \emph{angled
  $2$--complex} by declaring that every corner has angle $\pi/2$. 


  Observe that the curvature $\k(v)$ of every $0$--cell is $0$. Indeed, for
  an interior $0$--cell $v$ we have: $\k(v)=2\pi-\pi\cdot
  0-4\cdot\frac\pi2=0$, and for a $0$--cell $v$ on the boundary $\dS$ we
  have: $\k(v)=2\pi-\pi\cdot 1-2\cdot\frac\pi2=0$. Thus, the combinatorial
  Gauss--Bonnet formula gives us: 
  \[
  2\pi\chi(S) \ = \ \sum_{f\in\text{$2$--cells}}\k(f).
  \]

  For a $2$--cell $f$ we have 
  \[
  \k(f) \ = \ \frac\pi2\big(\text{\# of corners of
    $f$}\big)-\big((\text{\# of sides of $f$})-2\big)\pi \ = \ 2 
  \pi-\frac\pi2\big(\text{\# of sides of $f$}\big). 
  \]
  Therefore: 
  \begin{equation*}\label{eq1}
  1 - \chi(S) \ = \
  1+\sum_{f\in\text{$2$--cells}}\frac14\big((\text{\# of sides of
    $f$})-4\big).\tag{$\ast$} 
  \end{equation*}

  Recall that all faces of $S$ have four or more
  sides. The faces with exactly four sides contribute $0$
  to the sum in~\eqref{eq1}. Thus one can sum over only the $2$--cells
  $f$ which have $\ge5$ sides. For simplicity in what follows we will
  call them \emph{special faces}. 

  Our goal now is to relate the quantity in~\eqref{eq1} to the
  number of bands on $S$. We will do this simultaneously for the 
  case of free groups (thus obtaining Culler's bound $\scl(g) \ge
  1/6$) and for the case of two-dimensional right-angled Artin groups.  

  First, we modify the right-hand side of~\eqref{eq1} as follows. In the
  case of free groups the pattern arcs do not intersect each
  other. Hence the sides of a $2$--cell alternate between pattern arcs 
  and arcs in $\dS$. Therefore, the number of sides
  of any $2$--cell is always even. Hence the minimal number of sides of
  a special face is $6$. In the right-angled Artin group case, special
  faces can have $5$ or more sides. Thus we have: 
  \[
  (\text{\# of sides of $f$})-4 \ \ge \ A\cdot(\text{\# of sides of $f$}),
  \]
  where $A=\dfrac13$ for free groups and $A=\dfrac15$ for RAAGs.

  Second, recall from \remref{band-special-face} that each band is
  adjacent to at least one special face on each of its two sides. It is
  possible that the two sides of the band are adjacent to the
  \emph{same} special face, but they will do so in distinct sides of that
  face. Thus we may count the sides of special faces as follows: 
  \[
  \sum_{f\in\text{special faces}}(\text{\# of sides of $f$}) \ \ge \
  (\text{\# of bands})\times 2. 
  \]
  This inequality can be strict if there is more than one special face on
  one side of a band, or if there is a special face with one or
  more sides lying on $\dS$. Then these sides will not be accounted for
  by bands. 

  In the free group case, each special face has exactly half of its
  sides lying on $\dS$, so bands border exactly half of the total count
  of the sides of special faces. Thus we have: 
  \[
  \sum_{f\in\text{special faces}}(\text{\# of sides of $f$}) \ \ge \
  (\text{\# of bands})\times 4. 
  \]
  Going back to formula~\eqref{eq1}, we get:
  \[
  1 - \chi(S) \ \ge \ 1+\frac B4 \big(\text{\# of bands}\big),
  \]
  where $B=\dfrac 13\cdot 4$ for free groups and $B=\dfrac 15\cdot2$ for
  RAAGs. 

  Corollary~\ref{Cor:1} tells us that $(\text{\# of bands})\ge n$. Therefore
  \[
  \genus(S_0) \ = \ \dfrac12\cdot\big(1 - \chi(S_0)\big) \ \geq \
  \dfrac12\cdot\big(1 - \chi(S)\big) \ \ge \  
  \begin{cases}
  \ \dfrac12+\dfrac n{6}& \text{for free groups,}\\[1em]
  \ \dfrac12+\dfrac n{20}& \text{for RAAGs.}\\
  \end{cases}
  \]
  The right hand side is now a lower bound for $\cl(g^n)$ as before. 
  Dividing by $n$ and taking $n\to\infty$ yields
  \[
  \scl(g) \ \ge \ 
  \begin{cases}
    \ \dfrac 1{6}& \text{for free groups,}\\[1em]
    \ \dfrac 1{20}& \text{for RAAGs.}\\
  \end{cases}
  \]

  This finishes the proof of \thmref{2}.

\section{The non-overlapping property}

  \label{Sec:NoOv}

  The purpose of this section is to prove \thmref{NoOverlapping}, which was
  the key ingredient for the estimation of $\scl$ in the previous section
  (see \corref{1}). We start with basic definitions and some additional
  notions from~\cite{FFT}.

  \subsection*{Preliminaries}

  A CAT(0) cube complex $Y$ is a simply connected polyhedral complex in
  which the closed cells are standard Euclidean cubes $[0, 1]^n$ of various
  dimensions, such that any two cubes either have empty intersection or
  intersect in a single face, and the link of every vertex is a flag
  complex. The latter condition, called the Gromov Link Condition,
  guarantees that $Y$ is non-positively curved. The dimension of $Y$ is the
  dimension of its maximal dimensional cube. 

  An $n$--cube $[0,1]^n$ has $n$ midcubes defined by setting one of the
  coordinates equal to $1/2$. A \emph{hyperplane} in $Y$ is a closed
  subspace whose intersection with each cube is either empty or equal
  to a midcube. Each hyperplane separates $Y$ into two connected
  components. The closure of either of these two components is called
  a \emph{half-space}. The hyperplane which bounds a half-space $H$ is
  denoted by $\partial H$ and the half-space complementary to $H$ is
  denoted by $\ov{H}$. The set $\H(Y)$ of half-spaces of $Y$ is
  partially ordered by inclusion. Two distinct half-spaces $H$ and $H'$ are
  \emph{nested} if either $H \supset H'$ or $H' \supset H$; they are
  \emph{tightly-nested} if they are nested and no other half-space is
  nested between them. We say that $H$ and $H'$ \emph{cross}, denoted
  $H \trans H'$, if $\partial H$ and $\partial H'$ intersect. When
  this occurs, there is a square $S$ in $Y$ in which $\partial H \cap
  S$ and $\partial H' \cap S$ are the two midcubes of $S$. 

  Given $H, K \in \H(Y)$, with $H \supset K$, we will call a sequence of
  half-spaces $\gamma = \set{H_0, H_1,\ldots,H_n, H_{n+1}}$ a \emph{chain}
  of length $n$ from $H$ to $K$ if 
  \[ 
  H=H_0 \supset H_1 \supset \cdots \supset H_n \supset H_{n+1} = K.
  \] 
  A chain is \emph{taut} if every adjacent pair is tightly-nested. A chain
  is \emph{longest} if $n$ is largest possible; such chain is necessarily
  taut. Note that any chain from $H$ to $K$ can be enlarged to be a taut
  chain. We will call $H_m$, where $m = n/2$ if $n$ is even or $m=(n+1)/2$
  if $n$ is odd, the \emph{midpoint} of the chain. We make the following
  observation. 
  
  \begin{lemma} \label{Lem:MaxChains}
    Suppose $\gamma = \set{H,H_1,\dotsc,H_n,K}$ and $\gamma' = \set{H,
    H_1', \dotsc, H_n',K} $ are two longest chains from $H$ to $K$. Then
    the midpoints of $\gamma$ and $\gamma'$ either cross or coincide. 
  \end{lemma}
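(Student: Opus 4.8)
The plan is to argue by contradiction: suppose the midpoints $H_m = H_m'$ do not coincide and do not cross. Then their bounding hyperplanes $\partial H_m$ and $\partial H_m'$ are disjoint, so one of the four half-space containments holds; by symmetry (swapping the roles of $\gamma$ and $\gamma'$ if needed, and also using that $H_m, H_m'$ lie between $H$ and $K$) we may assume $H_m \supsetneq H_m'$ or $H_m' \supsetneq H_m$ — in fact, since both $H_m \supset K$ and $H_m' \supset K$ while $H_m, H_m' \subset H$, disjointness of the boundaries forces one to be nested in the other; say $H_m \supsetneq H_m'$. The idea is then to splice: take the top half $\set{H, H_1, \dotsc, H_{m-1}, H_m}$ of $\gamma$, follow it by the bottom half $\set{H_m', H_{m+1}', \dotsc, H_n', K}$ of $\gamma'$, and insert $H_m \supsetneq H_m'$ in the middle. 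This produces a chain from $H$ to $K$ of length $(m) + 1 + (n - m) = n + 1$, contradicting the assumption that $n$ was largest possible.

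The first step is to nail down exactly which nesting relation holds and to handle the parity cases for $m$ carefully, since $m = n/2$ when $n$ is even and $m = (n+1)/2$ when $n$ is odd; in each case I need to check that concatenating the top segment of $\gamma$ (which ends at $H_m$), the single nesting $H_m \supsetneq H_m'$, and the bottom segment of $\gamma'$ (which starts at $H_m'$) yields a genuine chain — i.e. a strictly descending sequence of half-spaces — that is strictly longer than $n$. The point where the two original chains are "cut" must be chosen so that the total count goes up by exactly one, which is where the precise definition of midpoint is used.

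The main obstacle I anticipate is ruling out the alternative nesting directions and the degenerate possibilities: a priori $\partial H_m$ and $\partial H_m'$ being disjoint allows $H_m \supset H_m'$, $H_m' \supset H_m$, $H_m \subset \ov{H_m'}$, or $\ov{H_m} \supset H_m'$, and one must use the constraints $H \supset H_m, H_m' \supset K$ to eliminate the "sideways" configurations (those would force $H$ and $K$ to be disjoint or the wrong way round, contradicting $H \supset K$). Once the nesting is pinned down the splicing is essentially bookkeeping. So concretely I would: (1) observe $H_m, H_m'$ both lie strictly between $H$ and $K$; (2) assume $\partial H_m \cap \partial H_m' = \emptyset$ and deduce $H_m$ and $H_m'$ are nested, WLOG $H_m \supset H_m'$; (3) form the spliced chain and count its length to be $n+1$; (4) conclude this contradicts longest-ness, so the midpoints must cross or coincide. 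A remark handling the odd/even parity of $n$ uniformly — for instance by noting the top segment of $\gamma$ has $m$ strict inclusions and the bottom segment of $\gamma'$ has $n + 1 - m$ of them — closes the argument.
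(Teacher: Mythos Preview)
Your proposal is correct and takes essentially the same approach as the paper: assume the midpoints are neither equal nor crossing, deduce they are nested (using that both lie between $H$ and $K$ to rule out the ``sideways'' configurations), and splice the top of one chain to the bottom of the other through the strict inclusion $H_m \supsetneq H_m'$ to get a chain of length $n+1$. The parity discussion is unnecessary---the splicing and the count work uniformly regardless of whether $n$ is even or odd---but this does not affect correctness.
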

  
  \begin{proof}
    Let $H_m$ and $H_m'$ be the midpoints of $\gamma$ and $\gamma'$
    respectively. If $H_m \ne H_m'$ and they do not cross, then the only
    other possibility is that they are nested. If $H_m \supset H_m'$ then
    the chain 
    \[\set{H,H_1,\ldots,H_m, H_m', \ldots, H_n',K}\] 
    from $H$ to $K$ is strictly longer than $\gamma$, a contradiction. If
    $H_m' \supset H_m$ then again one can construct a chain from $H$ to $K$
    that is longer than $\gamma$. 
  \end{proof}

  Equip $Y^{(1)}$ with the edge path metric $d$. Given two vertices
  $x$ and $y$, define the following set of half-spaces: 
  \[ [x,y] = \set{ H \in \half(Y) \st x \notin H, y \in H}.\] 
  An edge path from $x$ to $y$ is a geodesic if and only if it does
  not cross any hyperplane twice. Thus $d(x,y) = \card{[x,y]}$, where
  $\card{[x,y]}$ is the cardinality of $[x,y]$. An essential feature
  of this distance function is that $(Y,d)$ is a \emph{median
  space} \cite{Gerasimov,Roller}. That is, for every triple of
  vertices $x, y, z$, there 
  exists a unique vertex $m=m(x,y,z)$ such that $d(a,b) = d(a,m) +
  d(m,b)$ for all distinct $a,b \in \set{x, y, z}$. Equivalently,
  $[a,b]$ is the disjoint union $[a,m] \cup [m,b]$ for all distinct
  $a, b \in \set{x,y,z}$. 

  We say that $H \in \half(Y)$ \emph{intersects} an edge $e$ of $Y$ if
  $\partial H$ intersects $e$. Every oriented edge $e=(x,y)$ in $Y$
  naturally defines a half-space, namely $H=[x,y]$; and conversely, every
  half-space naturally defines an orientation on all the edges it
  intersects. 

  We say that $H \in \half(Y)$ \emph{intersects} a subcomplex $Z \subset Y$
  if $H$ intersects an edge of $Z$. If $Z$ is connected, then whenever $H,
  K \in \half(Y)$ both intersect $Z$ and are nested, every half-space in
  between them also intersects $Z$. A full subcomplex $Z$ of $Y$ is called
  \emph{convex} if $Z^{(1)}$ is convex in $Y^{(1)}$ with respect to $d$;
  that is, $Z$ contains all edge geodesics between pairs of vertices in
  $Z$. 

  \subsection*{The cube complex of a right-angled Artin group}

  Let $A(\Gamma)$ be a right-angled Artin group and $X(\Gamma)$ the 
  Salvetti complex associated to $A(\Gamma)$. By equipping each torus with
  the standard Euclidean metric, we obtain a CAT(0) cube complex structure
  on the universal cover $Y(\Gamma)$ of $X(\Gamma)$. The dimension of
  $Y(\Gamma)$ is the dimension of a maximal dimensional torus in
  $X(\Gamma)$. Equivalently, this is the largest size of a complete
  subgraph of $\Gamma$. 
  
  The $2$--skeleton of $Y(\Gamma)$ is the Cayley $2$--complex of the
  defining presentation for $A(\Gamma)$. That is, every oriented edge of
  $Y(\Gamma)$ is labeled by a generator of $A(\Gamma)$ or its inverse, and
  squares are bounded by edges whose labels correspond to \emph{distinct}
  vertices of $\Gamma$ that bound an edge. For any $H \in \half(Y)$, every
  oriented edge intersected by $H$ has the same label, and therefore every
  $H \in \half(Y)$ can be labeled accordingly. If $H \in \half(Y)$ has
  label $a$, then $\ov{H}$ has label $a^{-1}$. Note that $A(\Gamma)$ acts
  on $\half(Y)$ in a label-preserving manner. 

  Given $H, K \in \half(Y)$, if $H \trans K$, then this occurs in a
  square, and their labels correspond to distinct adjacent vertices of
  $\Gamma$. If $H, K$ are tightly-nested, then there exists an 
  oriented edge path of length two whose edges are dual to them, and
  these edges do not bound a square. Thus their labels either are equal, or
  correspond to distinct vertices of $\Gamma$ that are not adjacent. 

  These observations lead easily to the following results. They are
  a reformulation of the axioms of \emph{special cube
  complexes} from \cite{HaglundWise}, as stated in \cite[Lemma~7.3]{FFT}. 

  \begin{proposition} \label{Prop:Special}
    The action of $A(\Gamma)$ on $Y(\Gamma)$ satisfies the following
    properties: 
    \begin{enumerate}
      \item \label{s1} There do not exist $H\in \half(Y)$, $f \in
        A(\Gamma)$ such that $f(\ov{H}) = H$.
      \item \label{s2} There do not exist $H \in \half(Y)$, $f \in
        A(\Gamma)$ such that $H \trans fH$.
      \item \label{s3} There do not exist $H \in \half(Y)$, $f \in
        A(\Gamma)$ such that $H$ and $f(\ov{H})$ are tightly-nested.
      \item \label{s4} There do not exist a tightly-nested pair $H, K \in
        \half(Y)$ and $f \in A(\Gamma)$ such that $H \trans f(K)$. \qed
    \end{enumerate}
  \end{proposition}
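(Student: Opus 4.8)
The plan is to obtain all four statements as immediate consequences of the two structural facts recorded just above the proposition: first, that $A(\Gamma)$ acts on $\half(Y)$ in a \emph{label-preserving} manner, where the label of a half-space is a generator or its inverse and $\ov{H}$ carries label $a^{-1}$ whenever $H$ carries label $a$; and second, the two label dichotomies — if $H \trans K$ then the labels of $H$ and $K$ are \emph{distinct adjacent} vertices of $\Gamma$, whereas if $H,K$ are tightly-nested then their labels are either \emph{equal} or are \emph{distinct non-adjacent} vertices. Granting these, no further geometry of $Y(\Gamma)$ is needed: each of \eqref{s1}--\eqref{s4} becomes a one-line check.

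Concretely, for \eqref{s1}, if $f(\ov H) = H$ then label-preservation forces the labels of $\ov H$ and $H$ to coincide, but these are $a^{-1}$ and $a$ — a contradiction. For \eqref{s2}, $H$ and $fH$ carry the \emph{same} label, so in particular their labels are not distinct vertices of $\Gamma$, hence $H$ and $fH$ cannot cross. For \eqref{s3}, $H$ carries label $a$ while $f(\ov H)$ carries label $a^{-1}$; this pair of labels is neither equal (as signed generators) nor a pair of distinct vertices of $\Gamma$ (both are the vertex $a$), so $H$ and $f(\ov H)$ cannot be tightly-nested. For \eqref{s4}, if $H \trans f(K)$ then the underlying vertices of the labels of $H$ and $f(K)$ are distinct and adjacent in $\Gamma$; since $f(K)$ and $K$ share a label, the underlying vertices of the labels of $H$ and $K$ are then distinct and adjacent, contradicting that a tightly-nested pair has equal or distinct non-adjacent vertex labels.

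All the real content therefore lives in the preceding observations, which themselves come from inspecting which pairs of edges of the Cayley $2$--complex $Y(\Gamma)$ do or do not span a square (a crossing occurs inside a square, forcing distinct adjacent vertices; edges dual to a tightly-nested pair bound no square, forcing equal labels or distinct non-adjacent vertices). So I do not anticipate a genuine obstacle here; the one point worth stating carefully is the sign bookkeeping — that $\ov H$ has label $a^{-1}$ and not $a$ — since this is exactly what prevents $f$ from inverting $H$ or from tightly-nesting $H$ against $\ov H$, and so is precisely what powers \eqref{s1} and \eqref{s3}.
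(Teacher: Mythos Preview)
Your argument is correct and is exactly the approach the paper takes: the proposition is stated with a \qed and no separate proof, the authors having already noted that it follows directly from the label-preservation of the $A(\Gamma)$--action together with the crossing/tightly-nested label dichotomies, and that it is a reformulation of the special cube complex axioms from Haglund--Wise (cf.\ \cite[Lemma~7.3]{FFT}). Your one-line verifications of \eqref{s1}--\eqref{s4} are precisely the intended content of ``these observations lead easily to the following results.''
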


  As discussed in {\cite[Section 3]{FFT}, 
  every non-trivial $g \in A(\Gamma)$ has is a \emph{combinatorial axis}
  $L$, which is a bi-infinite geodesic in the $1$--skeleton of $Y(\Gamma)$
  such that $g(L) = L$. Let $A_g$ be the set of half-spaces that intersect
  $L$. Though combinatorial axes for $g$ are not unique, $A_g$ is
  independent of the choice of $L$. There is a decomposition of $A_g$ into
  two disjoint collections $A_g = A_g^+ \sqcup A_g^-$ such that $H \in
  A_g^-$ if and only if $\ov{H} \in A_g^+$, and $H \supset gH$ for all $H
  \in A_g^+$. In other words, every $H \in A_g^+$ contains the positive
  (attracting) end of $L$ and $\ov{H}$ contains the negative end.
  Hence, for every pair $H, K \in A_g^+$, either $H \trans K$ or $H$ and
  $K$ are nested. 

  Every non-trivial $g \in A(\Gamma)$ has an \emph{essential characteristic
  set} $Y_g \subset Y(\Gamma)$. 
It
  is a convex $\langle g\rangle$--invariant subcomplex, intersected by all the
  half-spaces of $A_g$ and no others.

  \begin{proposition}[{\cite[Section 3]{FFT}}] \label{Prop:Essential}
    The essential characteristic set $Y_g \subset Y(\Gamma)$ satisfies the
    following properties: 
    \begin{enumerate}
      \item For any $n > 0$, if $w$ is a cyclically reduced word
        representing the conjugacy class of $g^n$ and $x$ is a vertex
        of $Y_g$, then 
        \[ |w| \ = \ d(x,g^nx) \ = \ n \, d(x,gx).\] 
        Moreover, there
        is a combinatorial axis $L \subset Y_g$ for 
        $g^n$, such that every cyclic conjugate of $w$ appears as a
        word along the labels of $L$ (read in the positive direction). The set of half-spaces
        crossing $L$ and containing the positive end of $L$ is $A_g^+$. 
      \item For every $x \in Y_g$ we have $\displaystyle A_g^+ =
        \bigcup_{n \in \Z} [g^n x , g^{n+1}x]$. \qed
    \end{enumerate}
  \end{proposition}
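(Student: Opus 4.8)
The plan is to reproduce the argument of \cite[Section~3]{FFT}. I would establish assertion~(2) first, by an explicit bookkeeping with half-spaces built on the relation $H\supset gH$ for $H\in A_g^+$, and then deduce assertion~(1) from it together with the normal-form theory of right-angled Artin groups. Throughout I use that a geodesic edge-path from $a$ to $b$ crosses exactly the half-spaces of $[a,b]$, each once, and that by convexity any geodesic between vertices of $Y_g$ lies in $Y_g$, so every half-space meeting such a geodesic lies in $A_g$.

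\textbf{Assertion (2).} Fix a vertex $x\in Y_g$ and, for each $i\in\Z$, a geodesic $\sigma_i\subset Y_g$ from $g^ix$ to $g^{i+1}x$, with $\sigma_i=g^i\sigma_0$. The key claim is that the concatenation $\Sigma=\bigcup_i\sigma_i$ is a bi-infinite geodesic, hence (being $g$-invariant) a combinatorial axis for $g$; equivalently, no half-space is crossed twice. If $H$ crosses some $\sigma_i$ then $H\in A_g$; assume $H\in A_g^+$, so $H\supset gH$, i.e.\ $g^kH\subset H$ and $g^{-k}\ov{H}\subset\ov{H}$ for all $k\ge 0$. Since $H\subset g^{-1}H$, the condition $g^ix\in H$ forces $g^{i+1}x\in H$; thus $H$ can separate $g^ix$ from $g^{i+1}x$ only when $g^ix\notin H\ni g^{i+1}x$, i.e.\ $H\in[g^ix,g^{i+1}x]$, and then pushing $g^{i+1}x$ forward and $g^ix$ (which lies in $\ov{H}$) backward by powers of $g$ shows $g^jx\in H$ precisely for $j\ge i+1$; hence $\sigma_i$ is the only segment $H$ crosses. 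The case $H\in A_g^-$ is symmetric (pass to $\ov{H}\in A_g^+$). It follows that $\Sigma$ is a combinatorial axis, so the half-spaces crossing it are $A_g$, and the computation shows the members of $A_g^+$ crossing $\Sigma$ are exactly $\bigsqcup_i[g^ix,g^{i+1}x]$; as the half-spaces crossing $\sigma_i$ split into complementary pairs with one member in $[g^ix,g^{i+1}x]\subset A_g^+$ and the other in $A_g^-$, we get $A_g^+=\bigcup_{n\in\Z}[g^nx,g^{n+1}x]$. En route, $[x,g^nx]=\bigsqcup_{i=0}^{n-1}[g^ix,g^{i+1}x]$, so $d(x,g^nx)=n\,d(x,gx)$; and since $\bigl|d(x,g^Nx)-d(y,g^Ny)\bigr|\le 2\,d(x,y)$, the limit of $d(x,g^Nx)/N$ is independent of the point, so $\tau:=d(x,gx)$ takes the same value on all of $Y_g$ (namely the translation length $\ell(g)$).

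\textbf{Assertion (1).} Fix a base vertex $o$ of $Y(\Gamma)$, and let $w$ be cyclically reduced representing the conjugacy class of $g^n$. By the normal-form theory of right-angled Artin groups, $w$ cyclically reduced means the bi-infinite word $w^{\infty}$ is reduced, so the edge-path $P$ out of $o$ spelling $w^{\infty}$ is a bi-infinite geodesic and the element $\ov{w}$ it represents translates $P$ by $|w|$; thus $P$ is a combinatorial axis for $\ov{w}$. Writing $\ov{w}=hg^nh^{-1}$, the translate $L:=h^{-1}P$ is a combinatorial axis for $g^n$ (since $h^{-1}\ov{w}=g^nh^{-1}$), and as the $A(\Gamma)$-action is label-preserving on edges, the labels along $L$ again spell $w^{\infty}$; in particular every cyclic conjugate of $w$ occurs as a length-$|w|$ window along $L$. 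The inclusion $L\subset Y_g$ comes from the construction of $Y_g$ in \cite{FFT}: since \propref{Special} forces $H$ and $gH$ to be nested for every half-space $H$, one checks $A_{g^n}=A_g$, so $L$ crosses exactly $A_g$; a half-space separating a vertex of $L$ from $Y_g$ would then miss $L$ yet contain $Y_g$, which, after translating by powers of $g^n$, contradicts the fact that a strictly descending chain of half-spaces for the hyperbolic isometry $g$ has empty intersection. Finally, $|w|=d(o,\ov{w}\,o)$ is the translation length of $\ov{w}$ along $P$, equal to $\ell(\ov{w})=\ell(g^n)=n\,\ell(g)=n\tau=d(x,g^nx)$ for $x\in Y_g$; and among $A_g=A_g^+\sqcup A_g^-$, the half-spaces crossing $L$ that contain its positive end — the attracting fixed point of $g^n$, equivalently of $g$ — are exactly $A_g^+$, by the definition of the latter.

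\textbf{Expected main obstacle.} The technical heart is the half-space bookkeeping in assertion~(2): showing that each $H\in A_g^+$ is accounted for by exactly one fundamental segment $\sigma_i$, which is what makes $\Sigma$ geodesic and pins down the decomposition of $A_g^+$. Everything else rests on two imported facts that must be invoked with care: that the combinatorial axis $L$ realizing $w^{\infty}$ genuinely lies inside $Y_g$ (this is where the Haglund--Wise special-cube-complex conditions enter, through \propref{Special}), and the right-angled Artin normal-form equivalence ``$w$ cyclically reduced'' $\Leftrightarrow$ ``$w^{\infty}$ is a reduced bi-infinite word''. All of this is carried out in \cite[Section~3]{FFT}; the argument above only records its shape.
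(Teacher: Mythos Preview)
The paper does not supply a proof of this proposition: it is stated with a terminal \qed and attributed entirely to \cite[Section~3]{FFT}. Your proposal recognises this and offers a sketch of the cited argument, which is the same ``approach'' the paper takes. The sketch is broadly sound---the half-space bookkeeping in part~(2) (each $H\in A_g^+$ separates exactly one consecutive pair $g^ix,\,g^{i+1}x$) is the right mechanism, and the reduction of part~(1) to normal-form theory plus translation-length considerations is correct in outline. The one place where your sketch is genuinely thin is the containment $L\subset Y_g$: your argument that a half-space separating a vertex of $L$ from $Y_g$ leads to a contradiction via descending chains is not quite complete as written (you need that such an $H$ is moved properly by $g$, but $H\notin A_g$ by hypothesis, so the nesting $H\supset gH$ is not automatic). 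In \cite{FFT} this step goes through the explicit construction of $Y_g$ as an intersection of half-spaces, which you would need to invoke more directly. Since you explicitly flag that ``the argument above only records its shape'' and defer to \cite{FFT} for the details, this is acceptable for a citation-backed statement.
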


  \begin{lemma} \label{Lem:NoOverlapping}
    Given $x, y \in Y_g$ with $[x,y] \subset A_g^+$ and $\card{[x,y]} >
    \card{[x,gx]}/2$, if $[fy,fx] \subset A_g^+$ for some $f \in
    A(\Gamma)$, then there exists $H \in [x,y]$ and $n \in \Z$ such that
    $g^n H \in [fy,fx]$.
  \end{lemma}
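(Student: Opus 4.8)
The plan is to translate the statement into one about the $\langle g\rangle$-action on half-spaces and then argue by pigeonhole in the finite set $A_g^+/\langle g\rangle$.

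First I would record the orbit structure of $A_g^+$. By \propref{Essential}(2), $A_g^+=\bigcup_{m\in\Z}[g^m x,g^{m+1}x]$, and these blocks are pairwise disjoint: the points $g^m x$ lie on a bi-infinite geodesic running to the two ends of the axis, so any half-space in $A_g^+$ is crossed exactly once along this geodesic, which is incompatible with lying in two different blocks. Each block $[g^m x,g^{m+1}x]=g^m[x,gx]$ has exactly $T:=\card{[x,gx]}=d(x,gx)$ elements, and $g$ cyclically shifts the blocks; hence $\langle g\rangle$ acts freely on $A_g^+$ with exactly $T$ orbits, each orbit meeting each block once. Let $q\colon A_g^+\to A_g^+/\langle g\rangle$ be the quotient map, onto a $T$-element set. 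Since $[x,y]\subseteq A_g^+$ and $[fy,fx]\subseteq A_g^+$, the conclusion is precisely that $q([x,y])\cap q([fy,fx])\neq\emptyset$ — an overlap $g^nH\in[fy,fx]$ with $H\in[x,y]$ is the same thing as a common value $q(H)$. Finally $\card{[fy,fx]}=\card{f[y,x]}=\card{[y,x]}=\card{[x,y]}>T/2$, using $[y,x]=\set{\ov{H} : H\in[x,y]}$.

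It thus suffices to show that for any half-space set $I$ of the form $[a,b]$ with $I\subseteq A_g^+$ one has $\card{q(I)}\ge\min(\card I, T)$: then both $\card{q([x,y])}$ and $\card{q([fy,fx])}$ exceed $T/2$, so their sum exceeds $T$ and the images must meet. For the bound it is enough to prove that if $q|_I$ is not injective then $q(I)=A_g^+/\langle g\rangle$. Suppose $H,g^mH\in I$ with $m\ge1$. Since $H\supsetneq gH\supsetneq\cdots\supsetneq g^mH$, and $a\notin H$, $b\in g^mH$, one gets $g^jH\in[a,b]$ for $0\le j\le m$ ($g^jH\subseteq H$ forces $a\notin g^jH$; $g^jH\supseteq g^mH$ forces $b\in g^jH$), and the same computation shows that every half-space $K$ with $gH\subseteq K\subseteq H$ lies in $I$. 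One then has to see that this nested family meets every $\langle g\rangle$-orbit: I would pass to a combinatorial axis $L$ of $g$ (\propref{Essential}(1)), along which $A_g^+$ is in $\langle g\rangle$-equivariant bijection with the edges of $L$ (each half-space is crossed by $L$ in a unique edge), $g$ acting as the length-$T$ translation and the $T$ orbits corresponding to the $T$ edges of one period; since the edges crossed by $H$ and $gH$ are $T$ apart, one argues that for each intervening edge the nested chain from $gH$ up to $H$ contains a half-space crossing it, invoking that two half-spaces of $A_g^+$ are nested or cross and using the special-cube-complex axioms of \propref{Special} to exclude an orbit being skipped.

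The hard part is exactly this last step. When the axis $L$ runs alongside higher-dimensional cubes, distinct half-spaces of $A_g^+$ can pairwise cross, so $\set{K: gH\subseteq K\subseteq H}$ is not simply a run of $T+1$ consecutive half-spaces along $L$, and keeping it from missing an orbit is where \propref{Special} is genuinely used. (Equivalently, for $x,y\in Y_g$ one may attach to each orbit $J$ the $\langle g\rangle$-invariant integer $\delta_J=(x\text{-block index of }J)-(y\text{-block index of }J)$, so that $\card{[x,y]}=\sum_J\max(\delta_J,0)$ and $q([x,y])=\set{J:\delta_J\ge1}$; the point becomes that a positive interval cannot over-represent a single orbit, i.e.\ $\sum_J\max(\delta_J,0)>T/2$ forces $\card{\set{J:\delta_J\ge1}}>T/2$.) For $[fy,fx]$, whose endpoints need not lie in $Y_g$, one first re-anchors the block decomposition using only that $[fy,fx]\subseteq A_g^+$.
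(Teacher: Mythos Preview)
Your pigeonhole strategy in $A_g^+/\langle g\rangle$ hinges on the claim that for any interval $I=[a,b]\subset A_g^+$ one has $\card{q(I)}\ge\min(\card I,T)$, equivalently that if $q|_I$ is not injective then $q(I)$ is all of $A_g^+/\langle g\rangle$. This claim is false already in the simplest non-free example. Take $\Gamma$ a single edge, so $A(\Gamma)\cong\Z^2$, $Y(\Gamma)$ the standard square lattice, and $g=(1,1)$. Here $T=2$, every half-space lies in $A_g$ (so $Y_g$ is the whole plane), and $A_g^+/\langle g\rangle$ has exactly two elements, one for each coordinate direction. With $x=(0,0)$ and $y=(2,0)$ one has $[x,y]=\{\{x_1\ge 1\},\{x_1\ge 2\}\}\subset A_g^+$, and since $\{x_1\ge 2\}=g\{x_1\ge 1\}$ the map $q|_{[x,y]}$ is not injective, yet $q([x,y])$ is a single orbit. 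In your $\delta_J$ formulation this is $\delta_{x_1}=2$, $\delta_{x_2}=0$: one orbit can indeed be over-represented. Your proposed rescue via \propref{Special} cannot work, because $\Z^2$ satisfies all four axioms; the nested family $\{K:gH\subseteq K\subseteq H\}$ simply misses the $x_2$-orbit, since those half-spaces cross $H$ and $gH$ rather than nest between them.

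This example does not contradict the lemma---in $\Z^2$ no translation $f$ can satisfy $[fy,fx]\subset A_g^+$, as translations preserve labels---but that is exactly the point: the conclusion genuinely depends on the relation $[fy,fx]=\{f\ov H:H\in[x,y]\}$, which your argument never exploits. The paper's proof does use a pigeonhole, but one that couples the two intervals through $f$. In the case $[x,y]\subseteq[x,gx]$ it observes that every element of $[fy,fx]$ is a $g$-copy of something in $[x,gx]=[x,y]\sqcup[y,gx]$; if only copies from $[y,gx]$ appeared then two of them would share an orbit, say $g^iK,g^jK\in[fy,fx]$ with $i<j$, and writing $g^iK=f\ov H$ for some $H\in[x,y]$ one sandwiches $g^{i+1}H$ between them by a direct nesting argument. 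The remaining case $[x,y]\not\subseteq[x,gx]$ is handled separately via the median of $x,y,gx$ and a crossing argument.
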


  \begin{proof}
    In the following, by a \emph{copy} of a half-space $H \in A_g^+$, we
    will mean $g^kH$ for some $k \in \Z$. Note that since $[fy,fx] \subset
    A_g^+$, each element of $[fy, fx]$ is a copy of some element in $[x,gx]$. We proceed
    by considering two cases. 
    
    First suppose $[x,y] \subseteq [x,gx]$. In this case, $y=m(x,y,gx)$, so
    $\card{[x,y]} + \card{[y,gx]} = \card{[x,gx]}$. Since $\card{[x,y]} >
    \card{[x,gx]}/2$, \card{[y,gx]} < \card{[x,y]} = \card{[fy,fx]}. Thus, if
    $[fy,fx]$ contains at most one copy of each element of $[y,gx]$, then it
    must contain a copy of some element in $[x,y]$ and the statement holds.
    So suppose $[fy,fx]$ contains at least two copies of some $K \in
    [y,gx]$. That is, there exist $i < j$ such that $g^i K, g^j K \in
    [fy,fx]$. By definition, $g^i K = f \ov{H}$ for some $H \in
    [x,y]$. Consider $g^{i+1} H \in [g^{i+1}x, g^{i+1}y]$. The
    half-spaces $g^i K$, $g^{i+1}H$, and $g^j K$ have the same labels
    (up to sign), so no two of them can cross. Hence they are nested
    in some linear order. Note that $g^{i+1}x\in g^i K - g^{i+1}H$ and 
    $g^j y \in g^{i+1}H - g^j K$, since $i+1 \le j$. Thus $g^i K \supset
    g^{i+1}H \supset g^j K$. But this implies $g^{i+1}H \in [fy,fx]$,
    concluding the argument in this case. 

    Now suppose that $[x,y] \not\subseteq [x,gx]$. We
    can also assume that $[x,gx] \not\subseteq [x,y]$, for otherwise
    we will already be done. Let $z$ be the median of $x$, $y$, and
    $gx$. Then $[x,gx] = [x,z] \cup [z,gx]$ and $[x,y] = [x,z] \cup
    [z,y]$, where $[z, gx]$ and $[z, y]$ are both non-empty. Let $K \in [z,y]$ be any
    element. For any $K' \in [z,gx]$, since $K$ and $K'$ both lie in
    $A_g^+$, they are either nested or they cross. But $gx \in K'$ and
    $gx \notin K$ and $y \in K$ and $y \notin K'$, so it is impossible
    for them to be nested. Therefore, $K$ crosses every element of
    $[z,gx]$. Now consider $f\ov K \in [fy,fx]$, which by assumption
    lies in $A_g^+$, and thus $f \ov K = g^n H$ for some $H \in
    [x,gx]$. If $H \in [z,gx]$ then $g^n H$ and $g^n K$ must cross, as
    $H$ and $K$ do, but this is not possible since $g^n H = f\ov
    K$. Therefore $H \in [x,z] \subset [x,y]$, which concludes the
    proof. 
  \end{proof}
   
  \begin{proposition} \label{Prop:NoOverlapping}
    Let $x, y \in Y_g$ be vertices such that $[x,y] \subset A_g^+$ and
    $\card{[x,y]} > \card{[x,gx]}/2$. Then there does not exist $f \in
    A(\Gamma)$ such that $[fy,fx] \subset A_g^+$. 
  \end{proposition}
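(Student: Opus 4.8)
The plan is to argue by contradiction, feeding the output of \lemref{NoOverlapping} into the specialness axioms of \propref{Special}. Suppose, then, that some $f\in A(\Gamma)$ has $[fy,fx]\subset A_g^+$. Since $[fy,fx]=f([y,x])$ and $[y,x]=\set{\ov{L}:L\in[x,y]}$, \lemref{NoOverlapping} supplies an $H\in[x,y]$ and an $n\in\Z$ with $g^nH\in[fy,fx]$, so that $g^nH=f\,\ov{K}$ for a (unique) $K\in[x,y]$. Putting $\psi=f^{-1}g^n\in A(\Gamma)$, the relation becomes $\psi(H)=\ov{K}$, equivalently $\psi(\ov{H})=K$.

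The next step is to pin down the mutual position of $H$ and $K$. Both lie in $[x,y]\subset A_g^+$, so both contain the positive end of the axis; in particular $\ov{K}\in A_g^-$, hence $H\ne\ov{K}$. Because $A(\Gamma)$ acts on $\half(Y)$ label-preservingly, the equality $g^nH=f\,\ov{K}$ forces $H$ and $\ov{K}$ to carry the same label, so $H$ and $K$ carry the same underlying vertex of $\Gamma$ with opposite signs; in particular $H\ne K$. Since two half-spaces that cross must carry labels corresponding to \emph{distinct} adjacent vertices, $H$ and $K$ cannot cross, and being both in $A_g^+$ they are therefore nested, say $H\supsetneq K$ (the reverse case is symmetric, after replacing $\psi$ by $\psi^{-1}$). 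Thus $H$ and its ``flipped'' image $K=\psi(\ov{H})$ form a nested pair of distinct half-spaces.

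If this pair happens to be \emph{tightly}-nested, property~(\ref{s3}) of \propref{Special}, applied with the element $\psi$, is immediately contradicted. In general $H$ and $K$ need only be nested, and dealing with that is, I expect, the main obstacle. The approach I would take is a longest-chain argument: fix a longest chain $H=H_0\supsetneq H_1\supsetneq\cdots\supsetneq H_r=K$ in $A_g^+$, and compare it with its image under $\psi$, which runs from $\ov{K}$ in the opposite direction. The quantitative hypothesis $\card{[x,y]}>\card{[x,gx]}/2$ — which, via \propref{Essential}, says that $[x,y]$ spans more than half a period of $g$ along the axis — should be exactly what is needed to align a $g$-translate of the $\psi$-image with the original chain so that the two share a common sub-chain long enough for \lemref{MaxChains} to match up their midpoints. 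From the matched midpoints one then extracts a half-space $J$ among the $H_i$ and a group element $\varphi$ (a power of $g$ times $\psi$) with $\varphi(\ov{J})=J$, contradicting property~(\ref{s1}) of \propref{Special}. Everything up to this last reduction is essentially formal; establishing the alignment precisely enough to invoke \lemref{MaxChains} is the delicate point.
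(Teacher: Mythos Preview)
Your setup through the point where $H$ and $K$ are nested is correct and essentially matches the paper (the paper invokes axioms (\ref{s1}) and (\ref{s2}) of \propref{Special} rather than your label argument to rule out $H=K$ and $H\trans K$, but either works). The tightly-nested shortcut via (\ref{s3}) is also fine.

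The genuine gap is in the general nested case. You believe the quantitative hypothesis $\card{[x,y]}>\card{[x,gx]}/2$ must be invoked a second time to ``align a $g$-translate'' of the image chain with the original. This is a misdiagnosis: that hypothesis has already done all of its work inside \lemref{NoOverlapping}, and no further $g$-translation or periodicity argument is needed. What you are missing is a purely combinatorial symmetry step. From $\psi^{-1}(\ov{K})=H$ one must deduce that \emph{also} $\psi^{-1}(\ov{H})=K$, so that $\psi^{-1}$ exchanges $H$ and $K$ via complementation. The argument: take a longest chain $\gamma=\{H,H_1,\dotsc,H_r,K\}$; then $\psi^{-1}\ov{\gamma}=\{H,\psi^{-1}\ov{H}_r,\dotsc,\psi^{-1}\ov{H}_1,\psi^{-1}\ov{H}\}$ is a longest chain from $H$ to $\psi^{-1}\ov{H}$, of the same length $r$. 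If $\psi^{-1}\ov{H}\ne K$ then the two (sharing a label, hence not crossing) are nested, and appending one as an extra term to the appropriate chain produces a chain strictly longer than a longest one, a contradiction. Once $\psi^{-1}\ov{H}=K$, both $\gamma$ and $\psi^{-1}\ov{\gamma}$ are longest chains from $H$ to $K$, and \lemref{MaxChains} applies directly---no alignment is required.

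Finally, the endgame is not solely a violation of (\ref{s1}). The midpoint of $\psi^{-1}\ov{\gamma}$ is $\psi^{-1}\ov{H}_m$ when $r$ is odd and $\psi^{-1}\ov{H}_{m+1}$ when $r$ is even. Combining ``coincide or cross'' from \lemref{MaxChains} with this parity gives four cases, which contradict axioms (\ref{s1}), (\ref{s2}), (\ref{s3}), and (\ref{s4}) of \propref{Special} respectively; all four are genuinely needed.
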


  \begin{proof}
    The proof is by contradiction. Suppose such an element $f$ exists. Note
    that if $[fy,fx] \subset A_g^+$, then $[g^nfy,g^nfx] \subset A_g^+$ for
    all $n \in \Z$. Thus, using \lemref{NoOverlapping} and replacing $f$ by
    $g^{-n}f$ if necessary, we may assume that there exist $H, K \in [x,y]$
    such that $H = f \ov K$. It is not possible that $H = K$ or $H
    \trans K$, by \propref{Special} (\ref{s1}) and (\ref{s2}), so $H$
    and $K$ must 
    be nested. Suppose $H \supset K$. First we claim that $f \ov H = K$.
    Consider a longest chain from $H$ to $K$: 
    \[ 
      \gamma = \set{H, H_1, \ldots, H_n, K}.
    \]
    Maximality and nesting are preserved by the action of $A(\Gamma)$, so 
    \[ 
      f \ov{\gamma} \ = \ \set{ f \ov K, f\ov{H}_n, \dotsc, f\ov{H}_1, f \ov{H}} 
      \ = \ \set{ H, f\ov{H}_n, \dotsc, f\ov{H}_1, f \ov{H}} 
    \]
    is a longest chain from $H$ to $f \ov H$. If $f \ov{H} \ne K$, then
    they must be nested. If $f \ov{H} \supset K$, then 
    \[ \set{H, f\ov{H}_n, \dotsc, f\ov{H}_1, f \ov{H}, K} \] is strictly
    longer than $\gamma$, contradicting the choice of $\gamma$. Similarly,
    if $K \supset f \ov H$, then 
    \[ 
     \set{ H, H_1, \ldots, H_n, K, f \ov{H}} \ = \  
     \set{ f\ov{K}, H_1, \ldots, H_n, K, f \ov{H}} 
    \] 
    is strictly longer than $f \ov{\gamma}$. This shows that $f\ov H = K$.
    In particular, both $\gamma$ and $f\ov{\gamma}$ are longest chains from
    $H$ to $K$. To proceed with the contradiction, let $H_m$ and $H_m'$ be
    the midpoints of $\gamma$ and $f\ov{\gamma}$ respectively. By
    \lemref{MaxChains}, either $H_m = H_m'$ or they cross. If $n$ is odd,
    then $H_m' = f \ov{H}_m$. In this case, if $H_m = H_m'$, then this
    violates (\ref{s1}) of \propref{Special}. If $H_m$ and $H_m'$ cross,
    then this violates (\ref{s2}) of \propref{Special}. If $n$ is even,
    then $H_m' = f \ov{H}_{m+1}$. In this case, if $H_m = H_m'$, then
    $f^{-1}(\ov{H}_m) = H_{m+1}$. Since $H_m$ and $H_{m+1}$ are
    tightly-nested, this violates (\ref{s3}). Finally, if $H_m$ and $H_m'$
    cross, then since $H_m$ and $H_{m+1}$ are tightly-nested, this violates
    (\ref{s4}).
    
    The case $K \supset H$ will yield a similar contradiction. This
    concludes the proof. 
  \end{proof}

  \begin{theorem} \label{Thm:NoOverlapping} 
    Given $g\in A(\Gamma)$ and $n>0$, let $w$ be a cyclically reduced word
    in the generators of $A(\Gamma)$ representing the conjugacy class of
    the element $g^n$ in $A(\Gamma)$, and suppose $u$ is a word such that
    both $u$ and $u^{-1}$ appear as subwords of $w$ (considered as a cyclic
    word). Then  \[|u| \le \frac{|w|}{2n}.\]
  \end{theorem}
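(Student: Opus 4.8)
The plan is to translate the two occurrences of $u$ in $w$ into half-space data along a combinatorial axis and then feed that data to \propref{NoOverlapping}, which carries the real content. I would argue by contradiction: suppose $|u| > |w|/(2n)$. Then $u$ is non-empty, so $g$ is non-trivial and the machinery of this section applies. By \propref{Essential} there is a combinatorial axis $L\subset Y_g$ for $g^n$ along whose labels (read positively) every cyclic conjugate of $w$ appears as a block of consecutive labels, the half-spaces crossing $L$ and containing its positive end being exactly $A_g^+$; moreover $\card{[x,gx]} = d(x,gx) = |w|/n$ for any vertex $x$ of $Y_g$.

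Since $u$ is a subword of the cyclic word $w$, it occurs along $L$: there is a geodesic subpath $P$ of $L$, read positively from a vertex $x$ to a vertex $y$, whose label sequence is $u$. The half-spaces crossing $P$ and containing $y$ are precisely the elements of $[x,y]$; as $P$ runs positively, each of them contains the positive end of $L$, so $[x,y]\subset A_g^+$, and $\card{[x,y]} = d(x,y) = |u| > |w|/(2n) = \card{[x,gx]}/2$. In the same way $u^{-1}$ occurs along $L$, via a geodesic subpath $P'$ read positively from $x'$ to $y'$ with label sequence $u^{-1}$, and the same reasoning gives $[x',y']\subset A_g^+$.

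Next I would reverse $P'$: the subpath $\overline{P'}$ runs from $y'$ to $x'$, and since reversing inverts every label, its label sequence is $u$ — the same as that of $P$. Because the $1$-skeleton of $Y(\Gamma)$ is the Cayley graph of $A(\Gamma)$, on which $A(\Gamma)$ acts by label-preserving left translations, an edge path is determined by its initial vertex and its label sequence; hence the element $f\in A(\Gamma)$ with $f(x)=y'$ satisfies $f(P)=\overline{P'}$, and in particular $f(y)=x'$. Therefore $[fy,fx] = [x',y']\subset A_g^+$. But $x,y$ are vertices of $Y_g$ with $[x,y]\subset A_g^+$ and $\card{[x,y]}>\card{[x,gx]}/2$, so \propref{NoOverlapping} rules out the existence of such an $f$. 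This contradiction establishes $|u|\le |w|/(2n)$.

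I expect the only delicate points to be the bookkeeping identifications $\card{[x,y]}=d(x,y)$ and $[fy,fx]=[x',y']$, together with the orientation check that places $[x,y]$ and $[x',y']$ inside $A_g^+$ rather than $A_g^-$ — this is exactly where turning the forward occurrence of $u^{-1}$ into a backward occurrence of $u$ is used. Beyond that there is no genuine obstacle at this stage; all the difficulty has been absorbed into \propref{NoOverlapping} and the structural facts imported from \cite{FFT}.
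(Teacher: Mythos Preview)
Your proposal is correct and follows essentially the same route as the paper's proof: translate the two occurrences of $u$ and $u^{-1}$ along a combinatorial axis into half-space intervals $[x,y],[x',y']\subset A_g^+$, use the Cayley-graph action to find $f$ with $[fy,fx]=[x',y']$, and invoke \propref{NoOverlapping}. The only cosmetic differences are your contradiction framing and your choice to reverse $P'$ rather than $P$, neither of which affects the argument.
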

  
  \begin{proof}
    \thmref{NoOverlapping} follows easily from \propref{NoOverlapping}. To
    see this, let $Y_g$ be the essential characteristic set for $g$. By
    \propref{Essential} there is a path $P$ in $Y_g$ whose labels read $u$.
    Let $x$ and $y$ be, respectively, the initial and terminal endpoints of
    $P$, and let $\ov{P}$ be the reversal of $P$ (which reads $u^{-1}$).
    Since $w$ represents a positive power of $g$, $[x,y] \subset A_g^+$. 

    By the same reasoning there is a path $P'$, from $x'$ to
    $y'$, whose labels read $u^{-1}$, satisfying $[x', y'] \subset
    A_g^+$. The action of $A(\Gamma)$ is transitive on vertices,
    so there exists $f \in A(\Gamma)$ such that $fy = x'$. Since
    $\ov{P}$ and $P'$ both read $u^{-1}$, we must have $f\ov{P} = P'$,
    and therefore $[x',y'] = [fy,fx]$. Using \propref{NoOverlapping}
    we conclude that 
    \[ |u| \ \le \ \frac{d(x,gx)}{2} \ = \ \frac{|w|}{2n}. \qedhere \]
  \end{proof}


  \bibliographystyle{amsalpha}
  \bibliography{current}  

  \end{document}